\newcommand{\xycenter}[1]{\begin{center}
                          \mbox{\xymatrix{#1}}
                          \end{center}
                         }
\newcommand{\xlabel}[1]{
                        \label{#1}
                        \ifthenelse{\boolean{xlabels}}
                                   {\marginpar[\hfill{\tiny #1}]{{\tiny #1}}}
                                   {}
                       }
\newcommand{\ZZ}{\mathbb{Z}}
\newcommand{\AZ}{\mathbb{A}}
\newcommand{\CC}{\mathbb{C}}
\newcommand{\RR}{\mathbb{R}}
\newcommand{\QQ}{\mathbb{Q}}
\newcommand{\FF}{\mathbb{F}}
\newcommand{\suchthat}{\, | \,}
\newcommand{\problem}[1]
           {\ifthenelse{\boolean{probleme}}
                       {{\bf(PROBLEM: #1)\bf}}
                       {}
           }
\newcommand{\zukunft}[1]
           {\ifthenelse{\boolean{zukuenftiges}}
                       {{\bf(AUSBAUM\"OGLICHKEIT: #1)\bf}}
                       {}
           }
\newcommand{\extra}[1]
           {\ifthenelse{\boolean{extras}}
                       {{\bf EXTRA #1 EXTRA\bf}}
                       {}
           }
\newcommand{\ignore}[1]
           {\ifthenelse{\boolean{ignore}}
                       {{\bf IGNORE #1 IGNORE\bf}}
                       {}
           }
\DeclareMathOperator{\codim}{codim}
\DeclareMathOperator{\Gal}{Gal}
\DeclareMathOperator{\Img}{Im}
\DeclareMathOperator{\rank}{rank}
\DeclareMathOperator{\id}{id}
\DeclareMathOperator{\Aff}{Aff}
\DeclareMathOperator{\sing}{sing}
\theoremstyle{plain}
\newtheorem{thm}{Theorem}%[subsection]
\newtheorem{cor}[thm]{Corollary}
\newtheorem{lem}[thm]{Lemma}
\newtheorem{prop}[thm]{Proposition}
\newtheorem{conj}[thm]{Conjecture}
\numberwithin{thm}{subsection} 
\newtheorem*{thm*}{Theorem}
\newtheorem*{conj*}{Conjecture}
\newtheorem*{verm*}{Vermutung}
\theoremstyle{definition}
\newtheorem{defn}[thm]{Definition}
\newtheorem{rem}[thm]{Remark}
\newtheorem{notation}[thm]{Notation}
\newtheorem{experiment}[thm]{Experiment}
\newtheorem{heu}[thm]{Heuristic}
\newtheorem{cau}[thm]{Caution}
\numberwithin{equation}{section}
\newcommand{\nosubsections}{\renewcommand{\thethm}{\thesection.\arabic{thm}}
                            \setcounter{thm}{0}
                           }
\newcommand{\cref}[3]{(\ref{#1}, #2 \ref{#3})}
\date{\today}
\newcommand{\secemail}{
\setlength{\unitlength}{1pt}
bothmer
\begin{picture}(0,1)
\put(0,0){m}
\put(-5,0){@}
\end{picture}
ath.uni-hannover.de}
\theoremstyle{definition}
\newtheorem{calc}[thm]{Calculation}
\newcommand{\Fp}{\FF_p}
\newcommand{\Anp}{\AZ^n(\Fp)}
\newcommand{\gammatilde}{\tilde{\gamma}}
\begin{document}

\title[A survey of the Poincar{\'e} Center Problem in degree 3 ]
{A survey of the Poincar\'e Center Problem in degree 3 
using finite field heuristics}

\address{Courant Research Centre ''Higher Order Structures''\\
	Mathematisches Institiut\\ 
	University of G\"ottingen\\
         Bunsenstrasse 3-5\\ 
          D-37073 G\"ottingen
         }

\email{\secemail}

\urladdr{http://www.uni-math.gwdg.de/bothmer}

\thanks{Supported by the German Research Foundation 
(Deutsche Forschungsgemeinschaft (DFG)) through 
the Institutional Strategy of the University of G\"ottingen}

\author{Hans-Christian Graf v. Bothmer}
\author{Jakob Kr\"oker}

\begin{abstract}
We compare a heuristic count of components of the center variety in degree
$3$ with the equivalent count obtained from known families. From this comparison we
conjecture that more than 100 unknown components exist. 
\end{abstract} 

\maketitle

\newcommand{\dual}{^*}
\newcommand{\barf}{\bar{f}}
\newcommand{\barg}{\bar{g}}
\newcommand{\barh}{\bar{h}}
\newcommand{\bara}{\bar{a}}
\newcommand{\barJ}{\bar{J}}
\newcommand{\barN}{\bar{N}}
\newcommand{\barH}{\bar{H}}

\newcommand{\impsi}{\overline{\Img \psi}}
\newcommand{\imphi}{\overline{\Img \phi}}
\newcommand{\semi}{Theorem \ref{tSemi}}

\newcommand{\ZZx}{\ZZ[x_1,\dots,x_n]}
\newcommand{\QQx}{\QQ[x_1,\dots,x_n]}
\newcommand{\CCx}{\CC[x_1,\dots,x_n]}
\newcommand{\FFp}{\FF_p}
\newcommand{\FFx}{\FFp[x_1,\dots,x_n]}
\renewcommand{\AA}{\mathbb{A}}

\newcommand{\zoladek}{\.Zo\l\c adek\,}
\newcommand{\zoladeks}{\.Zo\l\c adek's\,}

%%%%%%%%%%%%%%%%%%%%%%%%%%%%%%%%%%%%%%%%%%%%
\section{Introduction}
%%%%%%%%%%%%%%%%%%%%%%%%%%%%%%%%%%%%%%%%%%%%
\nosubsections
In 1885 Poincar\'e asked when the differential equation
\[
y' = - \frac{x + p(x,y)}{y+q(x,y)} =: - \frac{P(x,y)}{Q(x,y)}
\]
with convergent power series $p(x,y)$ and $q(x,y)$ starting with quadratic terms, 
has stable solutions in the neighborhood of the equilibrium solution
$(x,y)=(0,0)$. This means that in such a neighborhood the solutions of the
equivalent plane autonomous system
\begin{align*}
	\dot{x} &= y + q(x,y) = Q(x,y)\\
	\dot{y} &= -x - p(x,y) = -P(x,y)
\end{align*}
are closed curves around $(0,0)$.

Poincar\'e showed that one can iteratively find a formal power series
$F = x^2+y^2+f_3(x,y)+f_4(x,y)+\dots$ such that
\[
	\det \begin{pmatrix} F_x & F_y \\ P & Q \end{pmatrix} = \sum_{j=1}^\infty s_j(x^{2j+2}+y^{2j+2})
\]
with $s_j$ rational polynomials in the coefficients of $P$ and $Q$.
If all $s_j$ vanish, and $F$ is convergent then $F$ is a constant of motion, i.e. its gradient field
satisfies $Pdx+Qdy=0$. Since $F$ starts with $x^2+y^2$ this shows that close to the origin all integral curves are closed and the system is stable. Therefore the $s_j$'s are
called the {\sl focal values} of $Pdx+Qdy$. Often also the notation $\eta_{2j} := s_j$ is used, and the $\eta_i$ are called {\sl Lyapunov quantities}.

Poincar\'e also showed, that if an analytic constant of motion exists, the focal values must vanish.
Later Frommer \cite{Frommer} proved that the systems above are stable if and only if all focal values vanish even without the assumption of convergence of $F$. (Frommer's proof contains a gap which can ben be closed \cite{vWahlGap})

Unfortunately it is in general impossible to check this condition for a given differential equation because
there are infinitely many focal values. In the case where $P$ and $Q$ are polynomials of degree
at most $d$, the $s_j$ are polynomials in finitely many unknowns. Hilbert's Basis Theorem then implies
that the ideal $I_\infty = (s_1,s_2,\dots)$ is finitely generated, i.e there exists an integer $m := m(d)$
such that
\[
		s_1 = s_2 = \dots = s_{m(d)} = 0  \implies s_j = 0 \quad\forall j.
\]
This shows that a finite criterion for stability exists, but due to the indirect proof of
Hilbert's Basis Theorem no value for $m(d)$ is obtained. In fact even today only $m(2)=3$ is known. In \cite{focalValues} we prove $m(3) \ge 13$ for complex centers. 

The proof for $m(2)=3$ is conceptually simple: Compute the first $3$ focal values as polynomials
in the coefficients of $P$ and $Q$ under the assumption $\deg(P)=\deg(Q)=2$. The $3$ polynomials cut out an algebraic variety in the space of all differential equations
of degree $2$. Then decompose, by hand or by computer, this variety into its irreducible components.
For each component prove that all its differential equations have a constant of motion. 

For $d=3$ this approach is not feasible because the polynomials $s_j$
are very large. They involve $14$ variables and are of weighted degree $2j$. 
For example the $s_6$ can be calculated with our script \verb#s6# available at \cite{centerfocusSourceforge} and has already $95760$ terms.  The polynomials $s_j$, $j\ge 7$ are hard to calculate.
Even if we would somehow obtain these polynomials, it is
extremely difficult to decompose the resulting variety into irreducible components. Even $I_5 = (s_1,\dots,s_5)$ can not be decomposed by current systems. So for $d=3$ only partial results are known, for example  \cite{ZoladekRomanovskii} and \cite{ChristopherSubspace}. In \cite{zoladekCorrection} \zoladek gives a list of $52$ families of differential
forms known to have a center. 

Our main tool is a statistical method of Schreyer \cite{irred} to estimate the number
of components of the locus $Z_i$ where the first $i$ focal values vanish. 
The basic idea is to reduce the
equations $s_k$ modulo a prime number $p$ and count the number
of $\Fp$-rational points of $Z_i$ with a tangent spaces of fixed codimension.
By the Weil Conjectures \cite{weilconjectures}, which were proved by Delinge \cite{deligneproof}, we know that the fraction of points 
\[
	\gamma_p(Z_i^c) := \frac{\#\{\text{$\Fp$ rational points on $Z_i$  with 
	$\codim T_{Z,z} = c$}\}}
					{p^{14}}
\]
is equal to 
\[
	r\Bigl( \frac{1}{p} \Bigr)^c + \text{higher order terms}
\]
for a disjoint union of $r$ smooth codimension $c$ varieties. If the components of $Z_i$ are not smooth and disjoint, this number
is expected to be smaller. More precisely, if the set of singular points has $r_s$ components of
codimension $c_s$ in the codimension $c$ components of $Z$, we expect by the
same reasoning that the fraction of singular points
\[
	\gamma_p(\sing(Z_i^c)) := \frac{\#\{\text{$\Fp$ rational singular points on $\codim c$ components of $Z_i$}\}}
					{p^{14}}
\]
is equal to
\[
	r_s\Bigl( \frac{1}{p} \Bigr)^{c+c_s} + \text{higher order terms}.
\]
If $r_s$ is small with respect to $p^{c_s}$ this error does not change the
expected number $\gamma_p(Z_i^c)$ significantly. 

Instead of evaluating the $s_k$ at all possible points, we look at a large number of
random points and obtain an approximate value of $\gamma_p(Z_i^c)$ that can be used to 
estimate $r$ and therefore give an indication of the number of components in codimension
$c$. All this is reviewed in Section \ref{sHeuristic}.

In Section \ref{sExperiment} we apply the above 
method using our implementation of Frommers Algorithm. The resulting
estimates can be found in Figure \ref{fJakob}.

In Section \ref{sZoladek} we analyse \zoladeks families in detail. We choose random points
on each family and apply the same statistic as above. Here we find that 
most families are either parametrizing non-reduced components of $Z$ or 
subvarieties of true components. Only 22 families seem to parametrize reduced
components of $Z$. Those components can be found in Figure \ref{fZoladek}.

Comparing this to our estimate from Section \ref{sExperiment} we find that
up to codimension $7$ both counts agree. In codim $8$ we found heuristic evidence
for $4$ components in Section \ref{sExperiment} while in \zoladeks list we find
$5$ such components. This apparent contradiction is resolved by showing that
two of \zoladeks codimension $8$ families ($CR_4$ and $CR_6$) contain the same
differential forms. For codimension $9$, $10$ and $11$ the heuristic method
predicts many more reduced components than those that are contained in
\zoladeks lists. We therefore conjecture that there are many more components
to be discovered (see Conjecture \ref{cNumComponents}).

The computations for this article were done at the Gauss Laboratory at the University of G\"ottingen. 
The source code for the \verb#Macaulay2#  calculations of Section \ref{sZoladek} is contained in 
 \verb#survey2.m2# using the packages
\verb#CenterFocus# and \verb#Frommer#. 
These files and the source code for our C++ Implementation of Frommers Algorithm  can be found at \cite{centerfocusSourceforge}.
\verb#Macaulay2# is available at \cite{M2}.

%%%%%%%%%%%%%%%%%%%%%%%%%%%%%%%%%%%%%%%%%%%%
\section{Preliminaries} \xlabel{sPrelim}
%%%%%%%%%%%%%%%%%%%%%%%%%%%%%%%%%%%%%%%%%%%%
\nosubsections

If not stated otherwise we work over an algebraically closed field in this paper. 

We write the differential equation
\[
y' = - \frac{P(x,y)}{Q(x,y)}
\]
as $P(x,y)dx + Q(x,y)dy = 0$. 

\begin{notation}
Furthermore we denote by

\begin{tabular}{ll}
$V$ & the $20$ dimensional space of degree $3$ differential forms $Pdx+Qdy$. \\
$W$ & the $14$ dimensional subspace of Poincar\'e differential forms \\
	& $(x+P_2(x,y)+P_3(x,y))dx + (y + Q_2(x,y) + Q_3(x,y))dy$
%$K$ & a commutative ring (usually a field or $\ZZ$),\\
%$\PP^2:=\PP^2_K$ & the projective plane over this field, \\
%$K[x,y,z] $ & the coordinate ring of $\PP^2$,\\
%$\AZ^2 \subset \PP^2$ & the affine plane where $z\not=0$, \\
%$\lineinf \subset \PP^2$ & the line at infinity where $z=0$,\\
%$K[x,y,z]_d$& the vector space of homogeneous degree $d$ polynomials,\\
%$K[x,y,z]_d$& the vector space of homogeneous\\
%& $\quad$ degree $d$ polynomials,\\
%$P,Q \in K[x,y,z]_d$ & two such Polynomials,\\
%$Pdx + Qdy$ & the corresponding differential form on $\PP^2$,\\
%$\Vd \cong K[x,y,z]_d \oplus K[x,y,z]_d$ & the vector space of all such differential forms,\\
%$\Vd \cong K[x,y,z]_d $ & the vector space of all such differential forms,\\
%$\quad \quad \oplus K[x,y,z]_d$ \\
%$\{ x^iy^jz^{d-i-j}dx, x^iy^jz^{d-i-j}dy \}_{i+j\le d} $& the monomial basis of this vector space,\\
%$\{ x^iy^jz^{d-i-j}dx, $& the monomial basis of this vector space,\\
%$\quad x^iy^jz^{d-i-j}dy \}_{i+j\le d} $\\
%$p_{ij}, q_{ij} \in K$ & the coordinates of $\Vd$ with respect to this basis. \\
\end{tabular}
\end{notation}

\begin{defn}
The group
\[
	G := \Aff_2 := \left\{ \begin{pmatrix} M & v \\ 0 & 1\end{pmatrix} \, | \, \det M \not= 0 \right\} \subset GL(3)
\]	
with $M = \bigl( \begin{smallmatrix} m_{11} & m_{21} \\ m_{12} & m_{22} \end{smallmatrix} \bigr)$ and $v = \bigl( \begin{smallmatrix} v_1\\ v_2 \end{smallmatrix} \bigr)$
is called the {\sl affine linear group}. $G$ acts on the space of differential forms $V$ by
affine linear transfomations, i.e. for $g \in G$
\begin{align*}
	g\begin{pmatrix} x \\ y \end{pmatrix} &= M \cdot \begin{pmatrix} x \\ y \end{pmatrix} + v \\
	g\begin{pmatrix} dx \\ dy \end{pmatrix} &= M \cdot \begin{pmatrix} dx \\ dy \end{pmatrix} 
\end{align*}
The subgroup
\[
	O(2) := \left\{ \begin{pmatrix} M & 0 \\ 0 & 1\end{pmatrix} \, | \, M M^T =  \begin{pmatrix} 1 & 0 \\ 0 & 1\end{pmatrix}\right\} \subset G;
\]
is called the {\sl orthogonal group}. $O(2)$ acts on $W$ since it fixes the linear part $xdx + ydy = \frac{1}{2}D(x^2+y^2)$.
\end{defn}

\begin{defn}
A differential form $\omega = Pdx + Qdy \in V$ has a zero in $a$ if $P(a) = Q(a) = 0$. We say
that $Pdx + Qdy$ has a {\sl center} at $a$ if in addition there exist formal power series $\mu$ and $F$ centered at $a$ such that $\mu(a) \not=0$ and $dF = \mu \omega$. In this case $\mu$ is called an {\sl integrating factor} and $F$ a f{\sl irst integral}. 
%The subvariety $Z \subset V$ of all differential forms with a center is called the {\sl center variety}.
\end{defn}

\begin{lem}
If $\omega$ has a center at $a$ then $d\omega (a) = 0$.
\end{lem}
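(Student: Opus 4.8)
The plan is to exploit the only structural fact available, namely the relation $dF = \mu\omega$, together with the identity $d\circ d = 0$. First I would apply $d$ to both sides of $dF = \mu\omega$. Since $F$ is a formal power series, $dF$ is a formal $1$-form and $d(dF) = 0$ by the usual computation with formal partial derivatives: the exterior derivative and wedge product of formal power series forms obey the same Leibniz rule and anticommutativity as in the convergent case. On the right-hand side the Leibniz rule gives $d(\mu\omega) = d\mu\wedge\omega + \mu\,d\omega$. Hence
\[
	0 = d\mu\wedge\omega + \mu\,d\omega ,
\]
so that $\mu\,d\omega = -\,d\mu\wedge\omega$ as an identity of formal $2$-forms centered at $a$.

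Next I would evaluate this identity at the point $a$. By hypothesis $\omega$ has a zero at $a$, i.e. $P(a) = Q(a) = 0$, so $\omega(a) = P(a)\,dx + Q(a)\,dy = 0$. Therefore $(d\mu\wedge\omega)(a) = d\mu(a)\wedge\omega(a) = 0$, and the displayed identity yields $\mu(a)\,d\omega(a) = 0$. Since $\mu(a)\neq 0$ by the definition of a center, we conclude $d\omega(a) = 0$.

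The only point requiring a word of care — and the closest thing to an obstacle here — is that $\mu$ and $F$ are a priori only formal power series rather than genuine functions, so one should check that ``$d$'', ``$\wedge$'' and evaluation at $a$ are understood as the formal analogues. But $d^2 = 0$ and the Leibniz rule hold at the level of formal power series, and evaluation at $a$ is just reading off the constant term, so the argument goes through verbatim; everything else is the one-line computation above.
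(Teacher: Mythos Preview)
Your proof is correct and follows essentially the same route as the paper: apply $d$ to $dF=\mu\omega$, use $d^2=0$ and the Leibniz rule to get $0=d\mu\wedge\omega+\mu\,d\omega$, evaluate at $a$ using $\omega(a)=0$, and divide by $\mu(a)\neq 0$. Your added remark that all of this is valid for formal power series is a reasonable clarification, but the argument is otherwise identical to the paper's.
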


\begin{proof}
If $\omega$ has a center at $a$, there exist $\mu$ and $F$ with $dF = \mu \omega$
as above. Applying $d$ to this equation we obtain
\[
	0 = ddF = (d\mu)\omega + \mu (d\omega)
\]
Evaluating at $a$ yields
\[
	0 = (d\mu)(a)\omega(a) + \mu(a) (d\omega)(a) = \mu(a) (d\omega)(a)
\]
since $\omega(a)=0$. Now $\mu(a) \not=0$ by definition, so we obtain $(d\omega)(a) =0$.
\end{proof}

\begin{lem} \xlabel{lTaylor}
If a differential form $\omega$ has a center at $a$ then there exists a first integral $F$ at $a$
whose Taylor expansion at $a$ 
\[
	F = F_{a,0} + F_{a,1} + F_{a,2} + \dots
\]
satisfies $F_{a,0}=F_{a,1}=0$. 
\end{lem}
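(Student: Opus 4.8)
The plan is to start from an arbitrary first integral $F$ supplied by the definition of a center at $a$ — so there are formal power series $\mu$ and $F$ centered at $a$ with $\mu(a)\neq 0$ and $dF=\mu\omega$ — and to normalise it in two cheap steps, first killing the constant Taylor coefficient and then observing that the linear one vanishes automatically.

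First I would replace $F$ by $\tilde F := F - F_{a,0}$. Since $d$ annihilates constants, $d\tilde F = dF = \mu\omega$, so $\tilde F$ is again a first integral at $a$ with the same integrating factor $\mu$ (and $\mu(a)\neq 0$ is untouched), and now $\tilde F_{a,0}=0$. Next I would show $\tilde F_{a,1}=0$ for free. Writing the Taylor expansion $\tilde F = \tilde F_{a,1} + \tilde F_{a,2} + \dots$ and applying $d$ termwise, the value of $d\tilde F$ at $a$ equals $d\tilde F_{a,1}$: the form $d\tilde F_{a,1}$ is constant because $\tilde F_{a,1}$ is linear, whereas for $k\ge 2$ every coefficient of $d\tilde F_{a,k}$ vanishes at $a$. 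On the other hand $d\tilde F = \mu\omega$ gives $(d\tilde F)(a) = \mu(a)\,\omega(a)$, and $\omega(a)=0$ because $\omega$ has a zero at $a$ (this is part of the definition of a center). Hence $d\tilde F_{a,1}=0$, and a linear form with vanishing differential is itself zero, so $\tilde F_{a,1}=0$. Thus $\tilde F$ is the desired first integral.

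I do not expect a genuine obstacle here: the content is essentially bookkeeping, namely checking that subtracting a constant does not disturb the equation $dF=\mu\omega$, and recognising that the vanishing of the linear Taylor coefficient of $\tilde F$ at $a$ is literally the statement $(d\tilde F)(a)=0$, which is forced by $\omega(a)=0$ — the same observation already used in the preceding lemma. The only point requiring a little care is the termwise differentiation of the formal Taylor series and the bookkeeping of which homogeneous pieces survive evaluation at $a$.
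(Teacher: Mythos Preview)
Your proof is correct and follows essentially the same route as the paper: subtract the constant term (harmless since only $dF$ matters), then evaluate $dF=\mu\omega$ at $a$ to force the linear part to vanish because $\omega(a)=0$. The paper's argument is identical in substance, just phrased by writing $F_{a,1}=\alpha(x-a_x)+\beta(y-a_y)$ explicitly and reading off $\alpha=\beta=0$.
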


\begin{proof}
$\omega$ has a first integral since it has a center at $a$. Since in the definition
of first integral only $dF$ appears one can set $F_{a,0}=0$ without loss of generality. 
Now $F_{a,i}$ are homogeneous polynomials of degree $i$ in $(x-a_x)$ and $(y-a_y)$.
Therefore $dF_i(a) = 0$ for all $i \not= 1$. Now $F_1 = \alpha (x-a_x) + \beta (y-dy)$ for certain
$\alpha$ and $\beta$. We obtain
\[
	\alpha dx + \beta dy = dF_1(a) = dF (a) = (\mu\omega)(a) = 0
\]
and conclude $F_1 = 0$. 
\end{proof}

\begin{defn} 
In the situation of Lemma \ref{lTaylor}, $F_{2,a} =: F_2(\omega,a)$ is called the {\sl quadric associatied to $\omega$ in $v$}. The rank of $F_2(\omega,a)$ is invariant under affine coordinate tranformations.
\end{defn}

\begin{rem}
If $\omega$ has a center at $(0,0)$ and $\rank F_2(\omega,(0,0)) = 2$ we can assume that $F$ has no constant or linear terms as above. Over an algebraically closed field we can find a coordinate change such that 
\[
	F = \frac{1}{2} (x^2 + y^2) + \dots
\]
and $\omega = xdx + ydy + \dots$, i.e $\omega$ is a Poincare differential form. 

Over an arbitrary field this is only possible if additional conditions are satisfied. For example over $\RR$ one must assume that the quadratic form associated to $F_2$ is positive definite. 
\end{rem}

\begin{defn}
Let $Pdx + Qdy$ be a Poincar\'e differential form of degree $3$ over a field of characteristic $0$. One can then use Frommer's algorithm  to find a formal 
power series $F \in K[[x,y]]$ with
\[
	\det \begin{pmatrix} F_x(x,y) & F_y(x,y) \\ P(x,y,1) & Q(x,y,1) \end{pmatrix} = \sum_{j=1}^\infty s_j(P,Q)(x^{2j+2}+y^{2j+2}).
\]
In this situation $s_j(P,Q)$ is called the {\sl $j$th focal value} of $Pdx+Qdy$. Frommer's algorithm also
implies that $s_j$ is polynomial on $W$ and has rational coefficients. We call $s_j \in \QQ[p_{ij},q_{ij}]$ the {\sl $j$th focal
polynomial}.
\end{defn}

\begin{rem}
By analysing Frommer's Algorithm \cite{martin} one can show no prime factor of that the denominator of $s_j$
is bigger than $2j+2$. Therefore $s_j \mod p$ is well defined for $j \le (p-3)/2$.
\end{rem}

\begin{defn}
We define the ideals 
\[
	I_j = (s_1,\dots,s_j), \quad I_\infty = (s_1,s_2,\dots)
\]
and their vanishing sets $Z_j = V(I_j) \subset W$. $Z_\infty$ is a variety whose points
are exactly the Poincar\'e differential forms with a center at $(0,0)$. We therefore call
it the {\sl center variety}.
\end{defn}

\begin{rem}
In the case of degree 3 differential forms considered here, $\QQ[p_{ij},q_{ij}]$ has
14 variables. Hilbert's Nullstellensatz implies that $I_\infty$ can be generated by
finitely many elements, therefore there exist a number $m := m(3)$ such that
$Z_{\infty} = Z_m$ and $Z_\infty \not= Z_{m-1}$. The precise value of $m(3)$ is unknown. In \cite{focalValues} the inequality $m(3) \ge 13$ is proven for complex centers. Since one can not
study $Z_\infty$ explicitly we analyze $Z_{13}$ in this paper. If $m(3)=13$ this
is equivalent to analyzing $Z_\infty$. Otherwise we have $Z_\infty \subset Z_{13}$. 
\end{rem}

%%%%%%%%%%%%%%%%%%%%%%%%%%%%%%%%%%%%%%%%%%%%
\section{Finite Field Heuristics} \xlabel{sHeuristic}
%%%%%%%%%%%%%%%%%%%%%%%%%%%%%%%%%%%%%%%%%%%%
\nosubsections

In this section we explain how one can obtain heuristic information about a variety $X \subset \AZ^n$
by evaluating its defining equations at random points. For an extended discussion about this method see \cite{irred} or \cite{FiniteFieldExperiments}. An application of this method
to the Poincar\'e center problems in some solved and some unsolved
cases is described in \cite{zentrum}.

\begin{defn}
Let $X \subset \Anp$ be an algebraic variety. Denote the number of $\Fp$-rational
points of $X$ by $|X(\Fp)|$.  Then
\[
	\gamma_p(X) = \frac{|X(\Fp)|}{|\Anp|}
\]
is called the {\sl fraction of $\Fp$-rational points of $X$ in $\AZ^n$}.
\end{defn}

\begin{rem}
If $X$ has $r$ irreducible reduced smooth components of codimension $c$ and all other irreducible components
have larger codimension then the Weil-Conjectures imply that
\[
	\gamma_p(X) = r \Bigl( \frac{1}{p} \Bigr)^c + \text{higher order terms in $\frac{1}{p}$}
\]
\end{rem}

We will estimate $\gamma_p(X)$ statistically by evaluating the equations defining $X$ in a
number of randomly chosen points.

\begin{defn}
Let $X \subset \Anp$ be an algebraic variety. For a sequence 
$S= (x_1,\dots,x_N)$ of $\Fp$-rational points in $\Anp$ we call
\[
	\gammatilde_p(X,S) = \frac{| \{ i \suchthat x_i \in X\}|}{N}
\]
the {\sl empirical fraction of  $\Fp$-rational points}.
\end{defn}

\begin{rem}
The distribution of $\gammatilde_p(X,S)$ on the set of all sequences $S$ of length $N$
is binomial with mean $\mu(\gammatilde_p(X,S)) = \gamma_p(X)$ and standard deviation
\[
	\sigma(\gammatilde_p(X,S)) = \sqrt{\frac{\gamma_p(X)(1-\gamma_p(X))}{N}}
	\approx \sqrt{\frac{\gamma_p(X)}{N}}
\]
\end{rem}

This allows us to obtain an estimate of $\gamma_p(X)$ and then of $r$ and $c$ by evaluating
the equations of $X$ in many random points. More information is obtained, if we also calculate the
tangent space of $X$ in these random points: 

\begin{defn}
Let $X \subset \AZ^n$ be an algebraic variety defined by $f_1 =\dots = f_r = 0$. Then the tangent space of $X$ in a point $x \in X$ is defined as
\[
	T_{X,x} = \ker \Bigl(\frac{df_i}{dx_j}(x)\Bigr)_{i = 1 \dots r, j = 1 \dots n}.
\]
\end{defn}

\begin{rem} 
Let $X' \subset X \subset \AZ^n$ be an irreducible component, 
$x \in X'$ a point and $T_{X',x}$ the
tangent space of $X'$ in $x$. Then
\[
		\codim X' \ge \codim T_{X',x}
\]
with equality for general points if $X'$ is reduced. We therefore consider only points with $\codim T_{X',x} = c$ in estimating the number of components of codimension $c$. By the inequality
above we disregard all points on components of codimension greater then $c$.
\end{rem}

These arguments lead us to

\begin{heu} \xlabel{hTangent}
Evaluate the equations of $X$ in $N$ random points $x_i$ over $\FF_p$ and calculate the
tangent spaces $T_{X,x_i}$ in these points. Then estimate
\[
	\#\{ \text{reduced codim $c$ components} \} \approx \frac{\#\{i \suchthat \codim T_{X,x_i} = c\}}{N}p^c
\]
with an estimated error
\[
	\Phi \frac{\sqrt{\#\{i \suchthat \codim T_{X,x_i} = c\}}}{N}p^c.
\]
In this paper we have used $\Phi=2$ to obtain a confidence level of approximately $95\%$.
\end{heu}

\begin{cau}
Let $X^c$ be the subvariety of $X$ whose points have a tangent space of codimension $c$. Then above heuristic means that statistically the hypothesis $\gamma_p(X^c) = r (1/p)^c$ can not be rejected 
with confidence of more than $4.6 \%$. Algebraically this proves nothing, but gives a way to arrive at a reasonable conjecture about $X$. 
\end{cau}

\begin{cau}
It is possible that $X$ contains a component $Y$ that is irreducible over $\QQ$ but
decomposes into several irreducible components $Y_1, \dots, Y_k$ 
over the algebraic closure $\overline{\QQ}$, i.e.
the Galois group $\Gal(\overline{\QQ} / \QQ)$ acts transitively
on the $Y_i$. Over a finite field a $Y_i$ is rational if the Frobenius endomorphism
fixes $Y_i$. The expected number of such components is $1$. Therefore
our heurisic is an indication of the number of reduced irreducible components over
$\QQ$ and not over $\overline{\QQ}$ or $\CC$. 
\end{cau}

\begin{rem}
If the components of $X$ are not smooth and disjoint, then
\[
	\frac{\#\{i \suchthat \codim T_{X,x_i} = c\}}{N}p^c
\]
is expected to be smaller than the actual number of reduced 
codim $c$ component. More precisely, if the set of singular points has $r_s$ components of
codimension $c_s$ in the codimension $c$ components of $X$, we expect by the
same reasoning that the number of singular points on codim $c$ components
to be approximately
\[
	\frac{r_sN}{p^{c+c_s}}.
\]
If $r_s$ is small compared to $p^{c_s}$ our Heuristic \ref{hTangent} is therefore also useful in the presence of singularities.  If not, the number calculated can still be used as a heuristic lower bound on the number of 
reduced components.
\end{rem}

%%%%%%%%%%%%%%%%%%%%%%%%%%%%%%%%%%%%%%%%%%%%
\section{Experiments} \xlabel{sExperiment}
%%%%%%%%%%%%%%%%%%%%%%%%%%%%%%%%%%%%%%%%%%%%
\nosubsections

Using the heuristics described in Section \ref{sHeuristic} one can estimate the
number and codimension of reduced components of the center variety $Z_{\infty}$. For this we
study $Z_{13} \supset Z_{\infty}$ as an approximation. 
This is possible because Frommer's algorithm \cite{Frommer}, \cite{zentrum}, \cite{Moritzen} provides
a fast way to calculate the focal values of a given Poincar\'e differential
form even though the explicit polynomial expressions for the focal values
are not known. 

\begin{experiment}
We examined $402376372880300032 \cong 4.02 \times 10^{17}$  points over $\FF_{29}$ and
determined the rank of the Jacobi matrix if the first 13 focal values vanished using
our implementation of Frommers algorithm \cite{centerfocusSourceforge}.

This would take about 11 years of CPU time on a 2.3 GHz AMD Opteron Prozessor with 128 KB 
L1-Cache and 512 KB L2-Cache using our newes implementation of Frommers algorithm and
a parametrization for the solution set of the first three focal values to speed up the process.
We distributed the work to 56 processors. 

The heuristic estimate derived from this
experiment is shown in Figure \ref{fJakob}. Interesting differential forms
found in this and other computer experiments as well as statistics about
theses experiments are collected in our online
database \cite{centerfocusDatabase}. 
\end{experiment}

\begin{rem}
To test our implementation we have used it to recalculate the focal values of the examples in \cite{Hoehn}. Also the focal values of our example in \cite{focalValues}  were calculated independently by Colin Christopher using \verb#Reduce# and agree with ours modulo $29$. Furthermore the fact that for most \zoladek differential forms we indeed find points whose first 13 focal values vanish (see Section \ref{sZoladek}) can be interpreted as another test of our implementation. 

To test the parametrization of the first thee focal values we compare the results obtained with and without using parametrization.

To ensure that our experiments can be repeated we use a pseudo random number generator and store the svn revision number of the program version used to do the calculation in our database.
\end{rem}

\begin{rem}
By applying elements of the group $O(2)$ to a given differential form $\omega$ over $\FF_{29}$
we obtain further differential forms that have exactly the same properties as $\omega$. 
Now the group $O(2)$ has $2\cdot28^2$ elements over $\FF_{29}$ and therefore only approximately
$$\frac{29^{14}}{2 \cdot 28^2}  \approx 1.9 \times 10^{17}$$
fundamentally different differential forms exist over $\FF_{29}$. Since we
choose our points randomly it can happen, that some forms that are
equivalent with respect to $O(2)$ have been analysed 
several times. This makes no difference for our statistics, but prevents us
from looking at all points even though we have made more than $1.9 \times 10^{17}$
calculations. More precisely the propability of missing a general $O(2)$ orbit was
\[
	\left( 1- \frac{1}{1.9 \times 10^{17}}\right)^{4.0 \times 10^{17}} \approx
	\exp\left(-\frac{4.0 \times 10^{17}}{1.9 \times 10^{17}}\right) \approx 12\%
\]
for our experiment. Therefore one can expect that we have seen about $88\%$ 
of the fundamentally different differential forms. 
\end{rem}

\begin{figure}
\[
\begin{array}{|c|c|c|c|}\hline
\rank & \text{points found} & \text{estimated number} & \text{error} \\ 
 &  &   \text{of components} &  \\ \hline
1 & 208 & 0 & < 0.01\\
2 & 61435 & 0 & < 0.01\\
3 & 2506200 & 0 & < 0.01\\
4 & 27367779 & 0 & < 0.01\\
5 & 19681046795 & 1.00 & < 0.01\\
6 & 1328814108 & 1.96 & < 0.01\\
7 & 89629060 & 3.84 & < 0.01\\
8 & 3082816 & 3.83 & < 0.01\\
9 & 332067 & 11.97 & .04\\
10 & 31422 & 32.85 & .37\\
11 & 2556 & 77.50 & 3.07\\
12 & 1 & .88 & 1.76\\
\hline
\end{array}
\]
\caption{Number of Poincar\'e differential forms whose first 13 focal values vanish over $\FF_{29}$ found after shifting through 402376372880300032 random differential forms.}
\label{fJakob}
\end{figure}

%%%%%%%%%%%%%%%%%%%%%%%%%%%%%%%%%%%%%%%%%%%%
\section{\zoladeks Lists} \xlabel{sZoladek}
%%%%%%%%%%%%%%%%%%%%%%%%%%%%%%%%%%%%%%%%%%%%
\nosubsections

In \cite{zoladekRational} and \cite{zoladekCorrection} \zoladek has given a list of 52 families 
\[
	\phi \colon \AA^n \to V
\]
of degree  3 differential forms with a center. They are 
divided into 17 rational reversible systems  and 35 Darboux
integrable systems but this distinction is not needed for
our survey. No claim on the completeness of this list is made.

\begin{rem}
In \cite{zentrum} we proved that \zoladeks families $CR_5$ and $CR_7$ are
subfamilies of $\overline{CD_4}$ and similarily $CR_{12}$ and  $CR_{16}$ are subfamilies
of $\overline{CD_2}$.  We will therefore not consider them in this paper.
\end{rem}

\begin{rem}
Notice that the trivial Hamiltonian component of differential forms $\omega$ that satisfy
\[
	\omega = dF
\]
for a polynomial $F$ of degree $4$, is not on \zoladeks list.
\end{rem}

\begin{rem}
In printing long lists of polynomials it is impossible not to introduce missprints.
For this paper we have started from the implementation in Ulrich Rheins thesis \cite{Rhein}
and made further corrections. Together we have made the following changes
\begin{itemize}
\item  In  $CR_1$ we changed the
second occurrence of $q$ to a new variable, enlarging the family to all symmetric forms.
\item For $CR_3$ we changed $(2a^2-b)$ to $(2a-b^2)$ in the expression for $F$ and
$5a/2$ to $3a$ in the expression for $H$. The first change is in \cite{Rhein} the second isn't.
\item For $CR_5$, $\dot{x}$ and $\dot{y}$ have to be exchanged in \cite{zoladekCorrection}.
Also $l$ has to be changed to $ly$. This is already corrected in \cite{Rhein}. 
\item In $CR_{11}$ a sign mistake was introduced in \cite{Rhein}
\item For $CR_{17}$ the derivatives $\eta_x$ and $\eta_y$ were not calculated correctly in \cite{Rhein}.
\item For $CD_{17}$ the equation 
\[ 4\beta(\beta - 1)a^2 + 4\beta(3 - 2\beta)a + (3 - 2\beta)(1 - 2\beta) = 0 \]
must be satisfied. Fortunately the curve defined by this equation is rational and 
can be parametrized by
\begin{align*}
	 a &= \frac{-t_0^4+2t_0^3t_1-2t_0^2t_1^2-2t_0t_1^3+3t_1^4}{-2t_0^3t_1+8t_0^2t_1^2-14t_0t_1^3+12t_1^4}\\
	 \beta &= \frac{-6t_0t_1^3+12t_1^4}
{-2t_0^3t_1+8t_0^2t_1^2-14t_0t_1^3+12t_1^4}.
\end{align*}
We substituted this parameterization into the expression for $CD_{17}$ set $t_1=1$
and considered only the numerator of the resulting expression. The  parameterization
was kindly computed for us by Janko B\"ohm \cite{Parametrization}.
\item For $CD_{24}$ we did not find any centers over $\FF_{29}$
\item In $CD_{25}$ the coefficient of $x^3$ was changed from $a$ to $\alpha$. This misprint
was already corrected in \cite{Rhein}
\item In $CD_{26}$ the division $/2$ must be erased. This was also found by \cite{Rhein}.
\item For $CD_{32}$ we did not find any centers over $\FF_{29}$
\item From  $CD_{33}$ we obtain degree $4$ differentials for generic coefficients.
Only in the case $a=1$ we were able to factor out another factor $x$.
We therefore only use $CD_{33}$ with this additional restriction. 
\item Some families can be trivially enlarged by scaling with a nonzero scalar. We 
did this for all $CD$'s except $CD_5$ and $CD_8$ by multiplying the formula given
by \zoladek with the variable $aa_{16}$.
\end{itemize}
The families we used are contained in our \verb#Macaulay2# 
package \verb#CenterFocus# \cite{centerfocusSourceforge}, where
we have renamed the variables $a,\dots, t, \alpha, \beta, \gamma$ to $aa_{1},\dots,aa_{19}$.
\end{rem}

To estimate what part of our statistic in Figure \ref{fJakob} is explained
by \zoladeks examples we need to take into account, that
\zoladeks examples are general degree 3 differential forms in $V$ while 
we are interested in Poincar\'e differential forms in $W$. Over
an algebraically closed field every degree 3 differential form
$\omega$ with a non degenerate center can transformed into
a Poincar\'e differential form by an affine transformation. It is 
the purpose of this section to formalize this process and keep
track of the dimensions of the families involved. 
 
\begin{defn}
The affine linear group $G$ acts on the center variety. Therefore if
\[
	\phi \colon \AA^n \to V
\]
is a family of differential forms with a center, then
\begin{align*}
	\psi \colon G \times \AA^n &\to V\\
	(g,a) & \mapsto (g(\phi (a)))
\end{align*}
is a (possibly larger) family of differential forms with a center that is
invariant under action of $G$. Furthermore
\[
	\impsi \cap W
\]
is a variety of Poincar\'e differential forms. 
\end{defn}

\begin{rem}
$\impsi \cap W$
can have several components $W_i$ of which at least one contains differential
forms with a center at $(0,0)$. The subset of such differential forms is then 
dense inside this component.
\end{rem}

\begin{lem} \xlabel{lRank}
Let $\phi \colon \AA^n \to V$ be a morphism, and $D \phi$ its differential. 
If $a \in A$ is a integral point with $\rank (D\phi)(a))_{\FF_p} = n$ then
$\dim \phi$ = n.
\end{lem}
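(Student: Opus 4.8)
The plan is to exploit the standard fact that the rank of the differential of a morphism is upper semicontinuous and that, over the ground field, its generic value equals the dimension of the image. Here the subtlety is that we are given rank information at a single $\FF_p$-point rather than at a geometric generic point, so the argument must pass through a spreading-out / reduction-mod-$p$ comparison. First I would recall that $\phi\colon\AA^n\to V$ has a well-defined image $\impsi$ (or rather $\overline{\Img\phi}$), whose dimension $d := \dim\phi$ satisfies $d = \max_{a}\rank (D\phi)(a)$ when this maximum is taken over geometric points and the characteristic is arbitrary — more precisely, $d$ equals the generic rank of $D\phi$ on each irreducible component dominating a $d$-dimensional piece of the image. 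In particular one always has $\rank (D\phi)(a) \le d$ for every point $a$, in any characteristic.

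The key step is the reverse inequality: if there exists an $\FF_p$-point $a$ with $\rank (D\phi)(a)_{\FF_p} = n$, then $d \ge n$, and since trivially $d \le n$ (the source has dimension $n$), we get $d = n$. To see $d \ge n$, I would argue that the locus in $\AA^n$ where $\rank D\phi < n$ is a proper closed subscheme (it is cut out by the vanishing of all $n\times n$ minors of the Jacobian matrix), \emph{provided} $D\phi$ has rank $n$ somewhere; the hypothesis gives us an honest point, namely $a$ over $\FF_p$, where this rank is attained. Hence the maximal rank of $D\phi$ over the algebraic closure is also $n$ (a nonzero minor remains nonzero after base change, and an $\FF_p$-point lifts to a geometric point of the same fiber). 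Therefore the generic rank of $D\phi$ is $n$, and by the generic-rank-equals-image-dimension principle (valid in all characteristics for the image dimension, even if generic smoothness fails), $\dim\phi = n$.

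Concretely, the cleanest way to package this: let $M(a)$ denote the Jacobian matrix $(D\phi)(a)$, an $(\dim V)\times n$ matrix with entries polynomial in $a$. The condition $\rank M(a) = n$ is the non-vanishing of at least one $n\times n$ minor $\Delta$. Since $\Delta(a) \neq 0$ for our given $\FF_p$-point $a$, the polynomial $\Delta$ is not identically zero on $\AA^n$, so the open set $U = \{\Delta \neq 0\}$ is non-empty and dense. On $U$ the differential $D\phi$ has rank $n$, hence $\phi|_U$ is unramified onto its image at those points; it follows that $\dim \overline{\phi(U)} = n$, and a fortiori $\dim\phi = \dim\overline{\Img\phi} \ge n$. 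Combined with $\dim\phi \le n$ we conclude $\dim\phi = n$.

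The main obstacle is purely one of bookkeeping across fields: one must be careful that ``$\rank = n$ at an $\FF_p$-point'' really does force the generic rank to be $n$ over $\overline{\QQ}$ (or over whatever algebraically closed field the paper works with), rather than merely over $\overline{\FF_p}$. This is handled by the minor argument above — a specific minor $\Delta \in \ZZ[a_1,\dots,a_n]$ (or in the relevant coefficient ring) that is nonzero mod $p$ is a fortiori nonzero as an integral polynomial, hence nonzero over any field — so no genuine difficulty arises, only the need to state the reduction carefully. Everything else (semicontinuity of rank, generic rank equals image dimension) is standard and characteristic-free for the statements actually used here.
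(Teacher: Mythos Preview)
Your argument is correct and is essentially the same as the paper's: both use that a nonzero $n\times n$ minor of the Jacobian at $a$ modulo $p$ forces that minor to be nonzero over the integers (the paper phrases this as the chain of inequalities $n \ge \rank (D\phi)(a) \ge \rank ((D\phi)(a))_{\FF_p} = n$), then invoke semicontinuity of rank to get generic rank $n$, and conclude $\dim\Img\phi = n$ from generic local injectivity. Your write-up is more explicit about the cross-characteristic bookkeeping, but there is no substantive difference in approach.
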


\begin{proof}
We have the following inequalities
\[
	n \ge \rank (D\phi)(a) \ge \rank ((D\phi) (a))_{\FF_p}  = n.
\]
Since the $D\phi$ drops rank only on Zariski closed subsets of $\AA^n$
we also know that $D\phi$ has generically rank $n$. If follows that $\phi$ is generically
locally injective and therefore $\dim \Img \phi = n$.
\end{proof}

\begin{calc} \xlabel{cDimFiberPhi}
We compared the number of variables $n$ involved in the definition of \zoladeks families 
with the rank of $D \phi$ in a random point $a$ 
using our script \verb#rankDifferential#. For all families both numbers agreed. Figure \ref{fDimImgPhi} contains \zoladeks families sorted by $n = \rank D\phi = \dim \Img \phi$.
\end{calc}

\begin{figure}
\[
\begin{array}{|c|c|c|}\hline
n & CR & CD \\ \hline
10 & 1 &   \\
8 & 6 , 11 &   \\
7 & 4 & 3 \\
6 & 2 , 14 & 1 , 2 , 4 \\
5 & 3 , 8 , 9 , 10 , 13 , 15 & 6 , 7 \\
4 &   & 8 , 13 , 14 , 18 , 19 , 20 , 21 , 28 , 34 , 35 \\
3 &   & 5 , 9 , 10 , 15 , 16 , 17 , 22 , 23 , 25 , 27 , 30 , 33 \\
2 & 17 & 11 , 12 , 24 , 26 , 29 , 31 , 32 \\
\hline
\end{array}
\]
\caption{Dimension of $\Img \phi$ for \zoladeks families} \label{fDimImgPhi}
\end{figure}

For the remaining calculations we need the following theorem on the
dimension of fibers of a morphim:%\semi:

\begin{thm} \xlabel{tSemi}
Let $\phi \colon X \to Y$ be a morphism of irreducible varieties over an algebraically closed
field. Then
\[
	\dim \phi^{-1}(y) \ge \dim X - \dim Y
\]
for all $y \in Y$
and there exist a Zariski open subset $U \subset Y$ such that
\[
	\dim \phi^{-1}(u) = \dim X - \dim Y
\]
for all $u \in U$. In this situation $\dim X - \dim Y$ is called the {\sl generic fiber dimension}.
\end{thm}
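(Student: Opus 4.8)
This is a classical theorem and I would follow the standard route. Both assertions are local on $Y$ and are unaffected by replacing $Y$ by an affine open or by $\overline{\phi(X)}$ (neither of which changes the fibres), so I would first reduce to the case where $X=\Spec A$ and $Y=\Spec B$ are affine with $A,B$ finitely generated domains over the base field $k$ and $\phi$ dominant, so that $B\hookrightarrow A$ and $K(Y)=\operatorname{Frac}(B)\hookrightarrow\operatorname{Frac}(A)=K(X)$. Put $n=\dim X=\operatorname{trdeg}_k K(X)$, $m=\dim Y=\operatorname{trdeg}_k K(Y)$ and $r=n-m=\operatorname{trdeg}_{K(Y)}K(X)$. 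The lower bound and the generic exactness are then proved by different tools: Krull's principal ideal theorem for the former, and ``spreading out'' a Noether normalization of the generic fibre for the latter.

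\textbf{The lower bound.} Fix $y\in Y$ with $\phi^{-1}(y)\neq\emptyset$. The local ring $\mathcal{O}_{Y,y}$ is Noetherian of Krull dimension $m$, so it contains a system of parameters; writing these $m$ elements as fractions of elements of $B$ and discarding the denominators (which are units at $y$), I obtain $u_1,\dots,u_m\in B$ with $y$ an isolated point of $V(u_1,\dots,u_m)\subseteq Y$. Shrinking $Y$ I may assume $V(u_1,\dots,u_m)=\{y\}$ as a set, so $\phi^{-1}(y)=V(\phi^\sharp u_1,\dots,\phi^\sharp u_m)\subseteq X$. Applying Krull's Hauptidealsatz $m$ times inside the $n$-dimensional domain $A$, every irreducible component of this zero locus has dimension $\ge n-m=r$. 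This proves $\dim\phi^{-1}(y)\ge r$ for all $y\in Y$, and in fact gives the sharper statement that every component of every fibre has dimension $\ge r$, which I will reuse below.

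\textbf{Generic exactness.} By the lower bound it suffices to exhibit a dense open $U\subseteq Y$ over which the fibres have dimension $\le r$. Applying Noether normalization over the field $\operatorname{Frac}(B)$ and then clearing denominators, I obtain $t_1,\dots,t_r\in A$, algebraically independent over $K(Y)$, and a nonzero $f\in B$ such that $A_f$ is a finite module over the polynomial ring $B_f[t_1,\dots,t_r]$ (the inclusion $B_f[t_1,\dots,t_r]\hookrightarrow A_f$ being injective since the $t_i$ are independent over $K(Y)$). Geometrically, over the dense open $U:=Y\setminus V(f)$ the morphism factors as $\phi^{-1}(U)\xrightarrow{\ \text{finite, surjective}\ }\AA^r\times U\xrightarrow{\ \mathrm{pr}\ }U$, and every fibre of $\phi$ over a point of $U$ lies entirely inside $\phi^{-1}(U)$; such a fibre is therefore finite over $\AA^r$, hence of dimension $\le r$. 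Combined with the lower bound it has dimension exactly $r$, so this $U$ works and the proof is complete.

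\textbf{Main obstacle.} The conceptual skeleton is short — transcendence degree controls the generic fibre dimension, Krull's theorem controls all the special ones — and the one step that takes genuine care is the ``relative Noether normalization'': passing from a Noether normalization of the generic fibre $X\times_Y\Spec K(Y)$ to an honest finite surjection $\phi^{-1}(U)\to\AA^r\times U$ over a dense open $U\subseteq Y$, i.e.\ arranging that the denominators appearing when one clears the coefficients of the integral dependence relations can all be absorbed into a single nonzero $f\in B$ (here one uses that $k$, being algebraically closed, is infinite, so a $k$-linear change of the $t_i$ puts the generic fibre in Noether position). One should also read ``$\dim\phi^{-1}(y)\ge\dim X-\dim Y$ for all $y\in Y$'' with the convention $\dim\emptyset=-\infty$, and, when $\phi$ fails to be dominant, with $Y$ replaced by $\overline{\phi(X)}$ in the count $\dim X-\dim Y$ so that it means the generic fibre dimension; both the statement and the argument are naturally phrased for dominant $\phi$.
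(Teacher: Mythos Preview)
Your argument is correct and follows the standard route (Krull's Hauptidealsatz for the lower bound, relative Noether normalization for the generic fibre). The paper does not prove this theorem at all: its entire proof is the citation \cite[\S 8, Theorems 2+3]{mumfordRed}, and your sketch is essentially the proof one finds there, so you are in complete agreement with the paper's intended argument.
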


\begin{proof}
\cite[¤8, Theorems 2+3]{mumfordRed}
\end{proof}

\begin{defn}
For a family $\phi$ we denote by $d_1 = n - \dim \Img \phi$ the generic
fiber dimension of $\phi$. For all \zoladek families considered in this
paper we have seen $d_1=0$ in Calculation \ref{cDimFiberPhi}.
\end{defn} 

\begin{defn}
Let $\phi \colon \AA^n \to V$ be a family of differential forms and $a \in \AA^n$ a
point. Then 
\[
	G_a = \{ g \in G | g(\phi(a)) \in \imphi \}
\]
is called the set of {\sl irrelevant elements} of $G$ with respect to $a$. Consider
now the variety
\[
	X = \{(g,a) | g(\phi(a)) \in \imphi\} \subset G \times \AA^n
\]
and the projection
\[
	\pi \colon X \to \AA^n
\]
then $\pi^{-1}(a) = G_a$. From \semi\, we obtain that for almost all $a$ 
\[
	\dim G_a  = \dim X - n =: d_2.
\]
we call $d_2$ the generic dimension of $G_a$.	
\end{defn}

\begin{calc}
In Figure \ref{fDimGa} we list subsets $H_a \subset G_a$ for almost all $a$ for all
of \zoladek rationally reversible families. That these are indeed subsets is checked by
our script \verb#isIrrelevant#.
\end{calc}

\begin{figure}
\[
\begin{array}{|c|c|c|c|c|c|}\hline
\text{Familiy} 
	& H	& \dim H 	& \dim G_a 		& n 	& \codim \\ 
         	&	&         	&           			&  	& \Img \psi \cap W\\ 
	&  	&		& (\ref{cDimGa}) 	& (\ref{cDimFiberPhi})	& (\ref{calcDimWImgPsi})\\
	\hline
CR_{1} 
                    & \bgroup\begin{pmatrix}\text{$m_{11}$}&
0\\
0&
\text{$m_{22}$}\\
\end{pmatrix}\egroup\bgroup\begin{pmatrix}0\\
\text{$v_{2}$}\\
\end{pmatrix}\egroup & 3 & 3 & 10 & 6 \\ \hline CR_{2} 
                    & \bgroup\begin{pmatrix}\text{$m_{22}$}&
0\\
0&
\text{$m_{22}$}\\
\end{pmatrix}\egroup\bgroup\begin{pmatrix}0\\
0\\
\end{pmatrix}\egroup & 1 & 1 & 6 & 8 \\ \hline CR_{3 }
                    & \bgroup\begin{pmatrix}\text{$m_{11}$}&
0\\
0&
\text{i$m_{11}$}\\
\end{pmatrix}\egroup\bgroup\begin{pmatrix}0\\
0\\
\end{pmatrix}\egroup & 1 & 1 & 5 & 9 \\ \hline CR_{4} 
                    & \bgroup\begin{pmatrix}\text{$m_{11}$}&
\text{$v_{2}$} \text{$m_{11}$} {\text{$aa^{-1}_{3}$}}\\
0&
\text{$v_{2}$} \text{$m_{11}$} {\text{$aa^{-1}_{3}$}}+\text{$m_{11}$}\\
\end{pmatrix}\egroup\bgroup\begin{pmatrix}0\\
\text{$v_{2}$}\\
\end{pmatrix}\egroup & 2 & 2 & 7 & 8 \\ \hline CR_{5} & \multicolumn{5}{c|}{\text{subfamily of $CD_4$ \cite{zentrum}}} \\ \hline CR_{6} 
                    & \bgroup\begin{pmatrix}\text{$m_{11}$}&
\text{$v_{2}$} \text{$m_{11}$} {\text{$aa^{-1}_{3}$}}\\
0&
\text{$m_{22}$}\\
\end{pmatrix}\egroup\bgroup\begin{pmatrix}0\\
\text{$v_{2}$}\\
\end{pmatrix}\egroup & 3 & 3 & 8 & 8 \\ \hline CR_{7} & \multicolumn{5}{c|}{\text{subfamily of $CD_4$ \cite{zentrum}}} \\ \hline CR_{8} 
                    & \bgroup\begin{pmatrix}\text{$m_{22}$}&
0\\
0&
\text{$m_{22}$}\\
\end{pmatrix}\egroup\bgroup\begin{pmatrix}0\\
0\\
\end{pmatrix}\egroup & 1 & 1 & 5 & 9 \\ \hline CR_{9} 
                    & \bgroup\begin{pmatrix}\text{$m_{22}$}&
0\\
0&
\text{$m_{22}$}\\
\end{pmatrix}\egroup\bgroup\begin{pmatrix}0\\
0\\
\end{pmatrix}\egroup & 1 & 1 & 5 & 9 \\ \hline CR_{10} 
                    & \bgroup\begin{pmatrix}\text{$m_{22}$}&
0\\
0&
\text{$m_{22}$}\\
\end{pmatrix}\egroup\bgroup\begin{pmatrix}0\\
0\\
\end{pmatrix}\egroup & 1 & 1 & 5 & 9 \\ \hline CR_{11} 
                    & \bgroup\begin{pmatrix}\text{$m_{11}$}&
\text{$v_{2}$} \text{$m_{11}$} {\text{$aa^{-1}_{3}$}}\\
0&
\text{$v_{2}$} \text{$m_{11}$} {\text{$aa^{-1}_{3}$}}+\text{$m_{11}$}\\
\end{pmatrix}\egroup\bgroup\begin{pmatrix}0\\
\text{$v_{2}$}\\
\end{pmatrix}\egroup & 2 & 2 & 8 & 7 \\ \hline CR_{12} & \multicolumn{5}{c|}{\text{subfamily of $CD_2$ \cite{zentrum}}} \\ \hline CR_{13 }
                    & \bgroup\begin{pmatrix}\text{$m_{22}$}&
0\\
0&
\text{$m_{22}$}\\
\end{pmatrix}\egroup\bgroup\begin{pmatrix}0\\
0\\
\end{pmatrix}\egroup & 1 & 1 & 5 & 9 \\ \hline CR_{14} 
                    & \bgroup\begin{pmatrix}\text{$m_{22}$}&
0\\
0&
\text{$m_{22}$}\\
\end{pmatrix}\egroup\bgroup\begin{pmatrix}0\\
0\\
\end{pmatrix}\egroup & 1 & 1 & 6 & 8 \\ \hline CR_{15} 
                    & \bgroup\begin{pmatrix}\text{$m_{22}$}&
0\\
0&
\text{$m_{22}$}\\
\end{pmatrix}\egroup\bgroup\begin{pmatrix}0\\
0\\
\end{pmatrix}\egroup & 1 & 1 & 5 & 9 \\ \hline CR_{16} & \multicolumn{5}{c|}{\text{subfamily of $CD_2$ \cite{zentrum}}} \\ \hline CR_{17} 
                    & \bgroup\begin{pmatrix}1&
0\\
0&
1\\
\end{pmatrix}\egroup\bgroup\begin{pmatrix}0\\
0\\
\end{pmatrix}\egroup & 0 & 0 & 2 & 11 \\ \hline 
\end{array}
\]
\caption{Calculating the codimension of $\Img \psi \cap W$ for \zoladeks rationally
reverible systems} \label{fDimGa}
\end{figure} 

\begin{calc} \xlabel{cDimGa}
For every rational reversible \zoladek family we calculate $\dim G_{a_0}$ for a random element
$a_0$ using our script \verb#idealIrrelevantElementsRandom#. The results can also be found in Figure \ref{fDimGa}.
 Since $\dim G_{a_0}$ is always bigger then the generic
dimension of $G_a$ we obtain for almost all $a \in \AA^n$:
\[
	\dim G_{a_0} \ge d_2 \ge \dim H_a
\]
From the Figure \ref{fDimGa} we see that for every \zoladek family these inequalities have to be
equalities and we can calclate $d_2$.
\end{calc}

\begin{calc}
For every Darboux integrable \zoladek family we calculate $\dim G_{a_0}$ for a random element
using our script \verb#idealIrrelevantElementsRandom#. We obtain that this
dimension is zero for all $CD_i$. Since
\[
	\dim G_{a_0} \ge d_2 \ge 0
\]
We obtain $d_2=0$ for these cases.
\end{calc}

\begin{prop} \xlabel{pDimImgPsi}
Consider $\psi \colon G \times \AA^n \to V$ as above. Then 
\[
	\dim \Img \psi = n + 6 - d_1 - d_2.
\]
\end{prop}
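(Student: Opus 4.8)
The plan is to track dimensions through the two maps $\phi \colon \AA^n \to V$ and $\psi \colon G \times \AA^n \to V$ using Theorem \ref{tSemi} (the semicontinuity of fibre dimension), exactly as the two generic fibre dimensions $d_1$ and $d_2$ were set up for this purpose. Note first that $G = \Aff_2$ has dimension $6$ (four entries of $M$, two of $v$), so $\dim(G \times \AA^n) = n+6$. Hence the source of $\psi$ has dimension $n+6$, and by Theorem \ref{tSemi} applied to $\psi$ (after restricting to the irreducible component of $G \times \AA^n$, which is all of it) we get
\[
	\dim \Img \psi = (n+6) - (\text{generic fibre dimension of } \psi).
\]
So the whole proposition reduces to computing the generic fibre dimension of $\psi$ and showing it equals $d_1 + d_2$.

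The key step is therefore to describe a general fibre $\psi^{-1}(\psi(g,a))$ and relate it to the data already defined. Fix a general pair $(g,a)$. A pair $(g',a')$ lies in the same fibre iff $g'(\phi(a')) = g(\phi(a))$, i.e. $\phi(a') = (g'^{-1}g)(\phi(a)) \in \imphi$, which says $g'^{-1}g \in G_{a}$ — so $g'$ ranges over the coset $g\cdot G_{a}^{-1}$ — and then $a'$ must lie in $\phi^{-1}$ of the point $(g'^{-1}g)(\phi(a))$. For general $(g,a)$ the first condition contributes $\dim G_a = d_2$ parameters (here one uses that $G_a$ has generic dimension $d_2$, as in the definition of $d_2$ via the variety $X$), and once $g'$ is fixed the fibre of $\phi$ over the resulting point of $\imphi$ has dimension $d_1 = n - \dim\imphi$ for general choices, by Theorem \ref{tSemi} applied to $\phi$. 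These two contributions are independent — the choice of $g'$ and the choice of $a'$ in the corresponding $\phi$-fibre vary in a product fashion — so the general fibre of $\psi$ has dimension $d_1 + d_2$. Plugging this into the displayed formula gives $\dim \Img \psi = n + 6 - d_1 - d_2$.

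The main obstacle is making the "independent, product-like" description of the fibre precise enough to read off its dimension, i.e. controlling that for a \emph{general} point in the image the fibre genuinely decomposes this way rather than only an upper/lower bound holding. The clean way to handle this is to work with the incidence variety $X = \{(g,a) \mid g(\phi(a)) \in \imphi\} \subset G \times \AA^n$ used to define $d_2$: one has the projection $\pi \colon X \to \AA^n$ with general fibre $G_a$ of dimension $d_2$, hence $\dim X = n + d_2$; then $\psi$ factors through a map whose fibres over $\Img\psi$ are built from $X$ together with the $\phi$-fibres, and a second application of Theorem \ref{tSemi} to the induced map $X \to \Img\psi$, combined with $\dim\Img\phi = n - d_1$, yields the count. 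One must also invoke that all relevant varieties are irreducible (or pass to the dominant component) so that Theorem \ref{tSemi} applies; over an algebraically closed field, with $G$ and $\AA^n$ irreducible, this is automatic for the sources, and $\Img\psi$ is irreducible as the image of an irreducible variety.
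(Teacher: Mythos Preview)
Your proposal is correct and follows essentially the same route as the paper: both arguments compute the generic fibre of $\psi$ by substituting $\tilde h = g'^{-1}g$ to land in $G_a$, then observe that for fixed $\tilde h$ the remaining choices of $a'$ form a $\phi$-fibre, so the generic fibre has dimension $d_1 + d_2$ and Theorem~\ref{tSemi} gives the result. Your additional discussion of the incidence variety $X$ and the irreducibility hypotheses is more careful than the paper's own write-up, but the underlying decomposition of the fibre is identical.
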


\begin{proof}
Let $\psi(g,a) = g(\phi(a))$ be a generic element of $\Img \psi$. Then the
fibere over this element is
\begin{align*}
	F &= \psi^{-1}g(\phi(a)) \\
	&= \{ (h,b) | h(\phi(b)) = g(\phi(a) \} \\
	&= \{ (h,b) | (\phi(b) = h^{-1}g(\phi(a)) \} \\
	&= \{ (\tilde{h},b) | \phi(b) = \tilde{h}(\phi(a)) \} \subset G_a \times \AA^n.
\end{align*}
Now consider the projection
\[
	\pi \colon F \to G_a
\]
For $\tilde{h} \in G_a$ we obtain
\begin{align*}
	\pi^{-1}(\tilde{h}) \
	&= \{ b | \phi(b) = \tilde{h}(\phi(a)) \} \\
	&= \{ b | \phi(b) = \phi(a') \} \\
	&= \phi^{-1}(a')
\end{align*}	
For generic $a$ and $\tilde{h}$ we therefore have 
\[
	\dim F = \dim G_a + \dim \phi^{-1}(a) = d_1 + d_2
\]
by \semi. Using \semi\, again for $\psi$ and generic $F$ we get
\[
	\dim \Img \psi = \dim (G \times \AA^n) - \dim F = n+6-d_1-d_2
\]
\end{proof}

\begin{prop} \xlabel{pDimImgPsiW}
Consider the variety
\[
	X = \{ (g,\omega) | g(\omega) \in W \} \subset G \times \Img \psi
\]
and $X_0 \subset X$ an irreducible component. Let
\begin{align*}
%	\sigma \colon X_0 &\to W \\
%		(g,\omega) &\mapsto g(\omega)\\
	\pi \colon 	X_0 &\to G \\
		(g,\omega) &\mapsto g
\end{align*}
be the natural projection. In this situation all fibers of $\pi$ are isomorphic and
$\pi^{-1} (id)$  is an irreducible component of  $\Img \psi \cap W$. Furthermore
this component has the dimension $\dim X_0 - 6$.
\end{prop}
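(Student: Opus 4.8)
The plan is to show that the variety $X$ is, over $G$, a trivial product: there is an isomorphism $X \cong G \times (\Img\psi \cap W)$ carrying $\pi$ to the first projection. Granting this, all three assertions follow formally. To build the isomorphism the one input needed is that $\Img\psi$ is $G$-invariant, which is built into the construction of $\psi$ (one has $h(\psi(g,a)) = \psi(hg,a)$, and this invariance passes to the closure). Define
\[
	\alpha \colon X \to G \times (\Img\psi \cap W), \qquad (g,\omega) \mapsto (g, g(\omega)).
\]
This is well defined: for $(g,\omega) \in X$ we have $\omega \in \Img\psi$, hence $g(\omega) \in g(\Img\psi) = \Img\psi$, while $g(\omega) \in W$ by definition of $X$, so $g(\omega) \in \Img\psi \cap W$. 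It is a morphism because the action map $G \times V \to V$ is a morphism. Its inverse is $(g,v) \mapsto (g, g^{-1}(v))$, again a morphism since inversion is a morphism on the algebraic group $G$ and $g^{-1}(\Img\psi) = \Img\psi$ keeps the value in $X$; the two composites are visibly the identity. Under $\alpha$ the projection $\pi \colon X \to G$ corresponds to $\mathrm{pr}_1 \colon G \times (\Img\psi \cap W) \to G$.

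Next I would read off the conclusions. Over the algebraically closed ground field $G$ is irreducible of dimension $6$ (it is a Zariski-open subset of $\AA^6$), so the irreducible components of $G \times (\Img\psi \cap W)$ are precisely the $G \times C$ with $C$ an irreducible component of $\Img\psi \cap W$. Hence $\alpha(X_0) = G \times C$ for a unique such $C$. The fibers of $\mathrm{pr}_1 \colon G \times C \to G$ are all isomorphic to $C$, so via $\alpha$ all fibers of $\pi \colon X_0 \to G$ are isomorphic. The special fiber is $\pi^{-1}(\id) = \alpha^{-1}(\{\id\} \times C) = \{(\id,\omega) : \omega \in C\}$, which the projection $(g,\omega) \mapsto \omega$ identifies with $C$, an irreducible component of $\Img\psi \cap W$. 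Finally $\dim X_0 = \dim(G \times C) = \dim G + \dim C = 6 + \dim C$, so this component has dimension $\dim X_0 - 6$.

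I do not expect a substantive obstacle; the content is exactly the product structure of $X$ over $G$. The only care needed is routine bookkeeping: verifying that $\alpha$ and its inverse are genuine morphisms of varieties (using that $G$ is an algebraic group acting algebraically on $V$, and that $G$-invariance yields $g^{-1}(\Img\psi) = \Img\psi$), and invoking the standard fact that forming a product with the irreducible variety $G$ induces a bijection on irreducible components with dimensions shifted by $\dim G$. If one reads $\Img\psi$ as the literal image rather than its closure, it remains $G$-invariant and the notions of component and dimension are unaffected, so nothing else changes.
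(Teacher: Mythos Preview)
Your proof is correct and is essentially the paper's argument: the paper also uses $G$-invariance of $\Img\psi$, phrased as the action $h\cdot(g,\omega)=(gh,h^{-1}\omega)$ on $X$, to identify every fiber of $\pi$ with $\pi^{-1}(\id)$. Your explicit trivialization $X\cong G\times(\Img\psi\cap W)$ is just a slightly tighter packaging of the same idea, giving the dimension formula directly where the paper instead closes with an appeal to the fiber-dimension theorem.
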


\begin{proof}
$G$ operates on $X$ via $h(g,\omega) = (gh,h^{-1}\omega)$. Since
$G$ is irreducible it also acts on every component $X_0 \subset X$. 
With this operation $\pi^{-1}(h) = h^{-1}(\pi^{-1}(\id))$. This proves the first claim.
Now
\[
	\pi^{-1}(\id) = \{ \omega | \omega \in W \} \cap X_0.
\]
This proves the second claim. The third claim follows from \semi.
\end{proof}
 
 \begin{lem} \xlabel{lSymIdeal}
 Let $X$ be the variety considerend in Proposition \ref{pDimImgPsiW} and the
 natural morphism
 \begin{align*}
 	\eta \colon X&\to \AA^2 \times \Img \psi \\
		((\begin{smallmatrix} M & v \\ 0 & 1 \end{smallmatrix}),\omega) 
		& \mapsto (-M^{-1}v,\omega).	
\end{align*} 
Then 
\[
	\Img \eta = \{ (\tilde{v},\omega) | \omega(\tilde{v}) = d\omega(\tilde{v}) = 0\,\text{and}\, \rank F_2(\tilde{v},\omega) = 2 \}.
\]
\end{lem}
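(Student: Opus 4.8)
The plan is to make the condition ``$g(\omega)\in W$'' explicit in terms of the pair $(\tilde v,\omega)$ with $\tilde v=-M^{-1}v$. Recall that $W$ consists of those $\omega'\in V$ that vanish at the origin and whose linear part at the origin is exactly $x\,dx+y\,dy$, i.e. has matrix $\Id$; and that the $G$-action on $V$ is by affine substitution and preserves degrees, so $g(\omega)\in V$ automatically.

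First I would record the transformation rules. Since $g$, viewed as an affine map of the plane, sends $\tilde v=-M^{-1}v$ to the origin, the form $g(\omega)$ vanishes at the origin if and only if $\omega$ vanishes at $\tilde v$. Writing the linear part $\omega_1$ of $\omega$ at $\tilde v$ in local coordinates $u$ as $u^{T}J\,du$, with $J=J_\omega(\tilde v)$ a $2\times2$ matrix, the chain rule shows that the linear part of $g(\omega)$ at the origin is $u^{T}(M^{-T}JM^{-1})\,du$. Hence
\[
 g(\omega)\in W \iff \omega(\tilde v)=0 \ \text{and}\ M^{-T}J_\omega(\tilde v)M^{-1}=\Id,
\]
and the last equality is equivalent to $J_\omega(\tilde v)=M^{T}M$. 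Two further elementary remarks will be used: $J_\omega(\tilde v)$ is symmetric if and only if $d\omega(\tilde v)=0$ (take $d$ of $\omega_1$); and, if $dF=\mu\omega$ is a first integral at $\tilde v$, then comparing the lowest-order terms gives $dF_2=\mu(\tilde v)\,\omega_1$, so that $F_2(\tilde v,\omega)$ is the quadric with matrix $\mu(\tilde v)\,J_\omega(\tilde v)$ and in particular $\rank F_2(\tilde v,\omega)=\rank J_\omega(\tilde v)$; moreover this quadric is already defined once $\omega(\tilde v)=d\omega(\tilde v)=0$, since solving $dF_2=\omega_1$ only requires $\omega_1$ to be closed.

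For the inclusion ``$\subseteq$'' take $(\tilde v,\omega)\in\Img\eta$, so $g(\omega)\in W$ for some $g=\bigl(\begin{smallmatrix}M&v\\0&1\end{smallmatrix}\bigr)$ with $-M^{-1}v=\tilde v$. Then $\omega(\tilde v)=0$ and $J_\omega(\tilde v)=M^{T}M$, which is symmetric and invertible; hence $d\omega(\tilde v)=0$ and $\rank F_2(\tilde v,\omega)=\rank J_\omega(\tilde v)=2$. For ``$\supseteq$'' take $(\tilde v,\omega)$ with $\omega\in\Img\psi$, $\omega(\tilde v)=d\omega(\tilde v)=0$ and $\rank F_2(\tilde v,\omega)=2$. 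Then $J:=J_\omega(\tilde v)$ is a symmetric invertible $2\times2$ matrix over the algebraically closed ground field, which we may take of characteristic $\ne2$; the associated nondegenerate quadratic form is therefore equivalent to the standard one, so there is $M\in\GL_2$ with $M^{T}M=J$. Setting $v:=-M\tilde v$ and $g:=\bigl(\begin{smallmatrix}M&v\\0&1\end{smallmatrix}\bigr)\in G$ we get $-M^{-1}v=\tilde v$ and, by the displayed equivalence, $g(\omega)\in W$; thus $(g,\omega)\in X$ and $\eta(g,\omega)=(\tilde v,\omega)$.

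The routine ingredient is the equivalence ``$J$ symmetric and invertible $\iff J=M^{T}M$ for some $M\in\GL_2$'' over an algebraically closed field. The point needing care is obtaining the transformation law of the quadratic form of the linear part in the form $J\mapsto M^{-T}JM^{-1}$ — not $J\mapsto MJM^{T}$ or the like — since it is this that forces the equation $J=M^{T}M$ and hence the precise shape of the right-hand side; together with the identification $\rank F_2(\tilde v,\omega)=\rank J_\omega(\tilde v)$ this is the substance of the proof.
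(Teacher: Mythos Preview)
Your proof is correct and follows essentially the same route as the paper's: both directions rest on translating the condition $g(\omega)\in W$ into the vanishing of $\omega$ and $d\omega$ at $\tilde v$ together with the nondegeneracy of the associated quadric, and on the diagonalizability of a nondegenerate binary quadratic form over an algebraically closed field. The only cosmetic difference is that the paper factors $g$ as a translation followed by a linear map bringing $F_2'$ to $\tfrac12(x^2+y^2)$, whereas you encode the same step as the single matrix equation $J_\omega(\tilde v)=M^{T}M$.
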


\begin{proof}
If $\omega' := (\begin{smallmatrix} M & v \\ 0 & 1 \end{smallmatrix})(\omega)$ lies in $W$,
it satisfies $\omega'(0) = d\omega'(0) = 0$ and $\rank F_2(0,\omega') = 2$. But then $\omega(-M^{-1}v) = d \omega(-M^{-1}v) = 0$. 
With $\tilde{v} := -M^{-1}v$ this shows
\[
	\Img \eta \subset \{ (\tilde{v},\omega) | 
	\omega(\tilde{v}) = d\omega(\tilde{v}) = 0
	\,\text{and}\, 
	\rank F_2(\tilde{v},\omega) = 2 \}.
\]
Conversely consider $(\tilde{v},\omega)$ with $\omega(\tilde{v}) = d\omega(\tilde{v}) = 0$ and $\rank F_2(\tilde{v},\omega)=2$. Then with $g = (\begin{smallmatrix} 1 & -\tilde{v} \\ 0 & 1 \end{smallmatrix})$ we have $\omega' = g(\omega)$ satisfiying $\omega' (0) = 0$. This shows that $\omega'$ is of the form
\[
	\omega' =   \omega_1' + \omega_2' + \dots
\]
with $\omega_1' = dF_2'$. Since $\rank F_2 = 2$  there exists an element 
$h = (\begin{smallmatrix} M & 0 \\ 0 & 1 \end{smallmatrix})$
such that $h(F_2')$ is $\frac{1}{2}(x^2+y^2)$. It follows that
\[
	\omega'' := h(\omega') = xdx + ydy + \dots
\]
and $(h \circ g, \omega)$ is an element of $X$ with image $(\tilde{v},\omega)$.
 \end{proof}
 
 \begin{figure}
\[
\begin{array}{|c|c|c|c|c|c|c|c|c|c|c|c|c|}\hline
 & 0 & 1 & 2 & 3 & 4 & 5 & 6 & 7 & 8 & 9 & 10 & 11 \\ \hline
CR_{1} &   &   &   &   &   &   & .87 &   &   &   &   &   \\
CR_{2} &   &   &   &   &   &   &   & .01 & .79 &   &   &   \\
CR_{3} &   &   &   &   &   &   &   &   &   & .74 &   &   \\
CR_{4} &   &   &   &   &   &   &   &   & .81 &   &   &   \\
CR_{6} &   &   &   &   &   &   &   &   & .88 &   &   &   \\
CR_{8} &   &   &   &   &   &   &   &   & .03 &   &   &   \\
CR_{9} &   &   &   &   &   &   &   &   &   & .73 &   &   \\
CR_{10} &   &   &   &   &   &   &   &   &   & .77 &   &   \\
CR_{11} &   &   &   &   &   &   &   & .73 &   &   &   &   \\
CR_{13} &   &   &   &   &   &   &   &   & .03 &   &   &   \\
CR_{14} &   &   &   &   &   &   &   &   & .66 & .09 &   &   \\
CR_{15} &   &   &   &   &   &   &   &   &   & .76 &   &   \\
CR_{17} &   &   &   &   &   &   &   &   &   &   & .03 &   \\
\hline
\end{array}
\]
\caption{Estimated number of center variety components
parametrized by \zoladeks rational reversible families} \label{fCRheuristic} 
\end{figure}

\begin{figure}
\[
\begin{array}{|c|c|c|c|c|c|c|c|c|c|c|c|c|}\hline
 & 0 & 1 & 2 & 3 & 4 & 5 & 6 & 7 & 8 & 9 & 10 & 11 \\ \hline
CD_{1} &   &   &   &   &   &   & .01 & .87 &   &   &   &   \\
CD_{2} &   &   &   &   &   &   &   & .76 &   &   &   &   \\
CD_{3} &   &   &   &   &   &   & .84 &   &   &   &   &   \\
CD_{4} &   &   &   &   &   &   &   & .77 &   &   &   &   \\
CD_{5} &   &   &   &   &   &   &   &   &   &   &   &   \\
CD_{6} &   &   &   &   &   &   &   & .03 &   &   &   &   \\
CD_{7} &   &   &   &   &   &   &   &   & .87 &   &   &   \\
CD_{8} &   &   &   &   &   &   &   &   &   & .93 &   &   \\
CD_{9} &   &   &   &   &   &   &   &   &   &   &   &   \\
CD_{10} &   &   &   &   &   &   &   &   &   &   & .96 &   \\
CD_{11} &   &   &   &   &   &   &   &   &   &   &   &   \\
CD_{12} &   &   &   &   &   &   &   &   &   &   &   &   \\
CD_{13} &   &   &   &   &   &   &   &   &   &   &   &   \\
CD_{14} &   &   &   &   &   &   &   &   & .02 &   &   &   \\
CD_{15} &   &   &   &   &   &   &   &   &   &   &   &   \\
CD_{16} &   &   &   &   &   &   &   &   &   & .02 &   &   \\
CD_{17} &   &   &   &   &   &   &   &   &   &   & .59 &   \\
CD_{18} &   &   &   &   &   &   &   &   & .02 &   &   &   \\
CD_{19} &   &   &   &   &   &   &   &   & .03 &   &   &   \\
CD_{20} &   &   &   &   &   &   &   &   & .02 &   &   &   \\
CD_{21} &   &   &   &   &   &   &   &   &   & .78 &   &   \\
CD_{22} &   &   &   &   &   &   &   &   &   &   &   &   \\
CD_{23} &   &   &   &   &   &   &   &   &   & .02 &   &   \\
CD_{25} &   &   &   &   &   &   &   &   &   &   & .64 &   \\
CD_{26} &   &   &   &   &   &   &   &   &   &   &   &   \\
CD_{27} &   &   &   &   &   &   &   &   &   &   & .67 &   \\
CD_{28} &   &   &   &   &   &   &   &   &   & .12 &   &   \\
CD_{29} &   &   &   &   &   &   &   &   &   &   &   &   \\
CD_{30} &   &   &   &   &   &   &   &   &   & .03 &   &   \\
CD_{31} &   &   &   &   &   &   &   &   &   &   &   & .84 \\
CD_{33} &   &   &   &   &   &   &   &   &   &   &   &   \\
CD_{34} &   &   &   &   &   &   &   &   &   &   &   &   \\
CD_{35} &   &   &   &   &   &   &   &   & .02 &   &   &   \\
\hline
\end{array}
\]

\caption{Estimated number of center variety components
parametrized by \zoladeks Darboux integrable families} \label{fCDheuristic} 
\end{figure}

 \begin{lem} \xlabel{lOtwo}
In the situation of Lemma \ref{lSymIdeal} the fibers of  
\[
 	\eta \colon X \to \Img \eta \subset \AA^2 \times \Img \psi
\]
are isomorphic (as varieties) to $O(2) \subset G$. In particular
the components $X_i$ of $X$ are in $1:1$ correspondence with the
components $E_i$ of $\Img \eta$ and $\dim X_i = \dim E_i + 1$.
\end{lem}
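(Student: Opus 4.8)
The plan is to compute the fibres of $\eta$ by hand, to recognise each of them as a coset of $O(2)$, and then to upgrade this into an $O(2)$-torsor structure on $\eta$ from which both assertions follow. So fix $(\tilde v,\omega)\in\Img\eta$. A point of $X$ lies over it precisely when it has the form $(g,\omega)$ with $g=(\begin{smallmatrix} M & -M\tilde v\\ 0 & 1\end{smallmatrix})$ and $g(\omega)\in W$. Factoring $g=h_M\circ t$ with $t=(\begin{smallmatrix} 1 & -\tilde v\\ 0 & 1\end{smallmatrix})$ and $h_M=(\begin{smallmatrix} M & 0\\ 0 & 1\end{smallmatrix})$, the proof of Lemma \ref{lSymIdeal} shows that $\omega':=t(\omega)$ vanishes at the origin with linear part $dF_2'$, where $F_2':=F_2(\tilde v,\omega)$ has rank $2$; hence $g(\omega)=h_M(\omega')$ lies in $W$ exactly when $h_M$ carries the quadratic form $F_2'$ to $\tfrac12(x^2+y^2)$. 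Over the algebraically closed base field such an $M_0$ exists (this is already used in the proof of Lemma \ref{lSymIdeal}), and then the admissible $M$ form the single coset $O(2)\cdot M_0$, where $O(2)$ is the stabiliser of $\tfrac12(x^2+y^2)$ and $M_0$ conjugates the orthogonal group of $F_2'$ onto it. Since left translation by $M_0$ is an isomorphism of varieties, $\eta^{-1}(\tilde v,\omega)\cong O(2)$ as a variety; in particular every fibre of $\eta$ is one-dimensional.

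Next I would make the group action explicit. The orthogonal group $O(2)\subset G$ acts on $X$ by $N\cdot(g,\omega)=(Ng,\omega)$: this preserves $X$ because $O(2)$ fixes $W$, and it commutes with $\eta$ because $-(NM)^{-1}(Nv)=-M^{-1}v$. In the coordinates above the induced action on $\eta^{-1}(\tilde v,\omega)$ is left multiplication $M\mapsto NM$ on $O(2)\cdot M_0$, hence simply transitive, so $\eta\colon X\to\Img\eta$ is an $O(2)$-torsor; in particular it is smooth, surjective and flat with all fibres equidimensional of dimension $1$. Consequently, for any irreducible component $X_i$ of $X$ the preimage $\eta^{-1}(\overline{\eta(X_i)})$ is an $O(2)$-torsor over the irreducible variety $\overline{\eta(X_i)}$ and $X_i$ is one of its components; equidimensionality of this torsor, together with \semi, then forces $\overline{\eta(X_i)}$ to be a component $E_i$ of $\Img\eta$ with $\dim X_i=\dim E_i+1$. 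Since $\Img\eta=\bigcup_i\overline{\eta(X_i)}$, every component of $\Img\eta$ arises this way, so $X_i\mapsto E_i$ is a surjection on components realising the dimension formula.

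It remains to prove that $X_i\mapsto E_i$ is injective, \ie that $\eta^{-1}(E_i)$ is irreducible for each component $E_i$. Because the \emph{connected} subgroup $SO(2)$ preserves every irreducible component of $X$, the components of $X$ coincide with those of the quotient $X/SO(2)$, and $X/SO(2)\to\Img\eta$ is a finite \'etale double cover, namely the quotient of the torsor by $O(2)/SO(2)\cong\ZZ/2$. Thus irreducibility of $\eta^{-1}(E_i)$ is equivalent to connectedness of this double cover over $E_i$, \ie to the statement that the reflection component of $O(2)$ maps each component of $X$ to itself. Intrinsically the double cover over $\Img\eta$ is the one that orders the two isotropic lines of $F_2$, which stay distinct throughout since $\rank F_2=2$; I would establish connectedness over each $E_i$ by showing that these two lines undergo nontrivial monodromy along $E_i$ — for instance by checking that the closure of $E_i$ meets the locus where $F_2$ drops to rank $1$, around which the two lines are interchanged. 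This last point is the only part that does not follow formally from the torsor structure, and is where I expect the main difficulty to lie.
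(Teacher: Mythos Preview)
Your computation of the fibres is correct and is essentially the paper's argument in different clothing: the paper defines a $G$-action on $\AA^2\times\Img\psi$ making $\eta$ equivariant, uses it to transport an arbitrary fibre to one of the form $\eta^{-1}(0,\omega'')$ with $\omega''\in W$, and then observes that this fibre equals $O(2)$. You instead factor $g=h_M\circ t$ and identify the fibre directly as a left $O(2)$-coset. Both arrive at the same place; your version makes the torsor structure for the left $O(2)$-action on $X$ explicit, which the paper does not do.

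Where you go beyond the paper is in the ``in particular'' clause. The paper's proof stops after the fibre computation and does not justify the $1{:}1$ correspondence of components or the dimension formula at all; you are right that the dimension formula $\dim X_i=\dim E_i+1$ follows from the torsor structure together with \semi\ (using that the connected group $SO(2)$ preserves each $X_i$, so $\eta|_{X_i}$ still has one-dimensional fibres), and that surjectivity of $X_i\mapsto E_i$ is immediate.

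Your concern about injectivity is legitimate: since $O(2)$ has two components, $\eta^{-1}(E_i)$ could a priori split into two irreducible pieces, and the paper says nothing to exclude this. However, your proposed fix via monodromy of the isotropic lines is not a proof as it stands --- you would need to know that every component $E_i$ actually meets the rank-$1$ degeneration locus, and there is no reason this should hold for an arbitrary family $\phi$. In fact the honest statement is that the $1{:}1$ correspondence may fail, but this does not matter for the paper: the only consequence drawn later (Corollary \ref{cDimWImgPsi} and the surrounding remarks) is the dimension formula $\dim X=\dim\Img\psi+1$, which you have already established from the fibre description alone. So rather than trying to repair the bijectivity claim, it would be cleaner to note that at most two $X_i$ lie over each $E_i$, that all of them satisfy $\dim X_i=\dim E_i+1$, and that this is all that is used downstream.
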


\begin{proof}
The group $G$ acts on $\AA^2 \times \Img \psi$ via
\[
	h(\tilde{v},\omega) = (h(\tilde{v}),h^{-1}(\omega))
\]
where for 
$h = (\begin{smallmatrix} M' & v'  \\ 0 & 1 \end{smallmatrix})$
we set
\[
	h(\tilde{v}) := -(M')^{-1}(v'+\tilde{v}).
\]
With this action the morphism $\eta$ is $G$ covariant. It follows that
the fiber $\eta^{-1}(v,\omega)$ is isomorphic to a fiber $\eta^{-1}(0,\omega')$
with $\omega' = h^{-1}(\omega)$ for an $h$ with $h(v) =0$. We have
\begin{align*}
	\eta^{-1}(0,\omega') 
	&= \{ M | 
	(\begin{smallmatrix} M & v  \\ 0 & 1 \end{smallmatrix})(\omega') \in W 
	\wedge -M^{-1} v = 0 \} \\
	&= \{ M | 
	(\begin{smallmatrix} M & 0  \\ 0 & 1 \end{smallmatrix})(\omega') \in W \} .
\end{align*}
This set is non empty, since $(0,\omega')$ is in the image of $\eta$. Therefore there exists an
$h'$ such that $h'(0,\omega') = (0,\omega'')$ with $\omega'' \in W$. Now
\begin{align*}
	\eta^{-1}(0,\omega'') 
	&= \{ M | 
	(\begin{smallmatrix} M & 0  \\ 0 & 1 \end{smallmatrix})(\omega'') \in W \} \\
	&= O(2) \subset G
\end{align*}
since only elements with $MM^T = (\begin{smallmatrix} 1 & 0  \\ 0 & 1 \end{smallmatrix})$
fix the linear part $\omega_1'' = xdx + ydy$. 
\end{proof}

\begin{cor} \xlabel{cDimWImgPsi}
If $\phi$ is a family of differential forms with a center whose generic element has
only finitely many zeros and a center of rank $2$, then 
\[
	\dim W \cap \Img \psi = n+1-d_1-d_2.
\]
\end{cor}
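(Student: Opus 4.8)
The plan is to read off $\dim(W\cap\Img\psi)$ by chasing dimensions through the three projections already set up — the projection $\pi\colon X\to G$ of Proposition \ref{pDimImgPsiW}, the morphism $\eta\colon X\to\Img\eta$ of Lemmas \ref{lSymIdeal} and \ref{lOtwo}, and the second projection $\rho\colon\Img\eta\to\Img\psi$ — and then feeding in the value $\dim\Img\psi=n+6-d_1-d_2$ from Proposition \ref{pDimImgPsi}.

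\textbf{Step 1: reduce to computing $\dim X$.} With $X=\{(g,\omega)\mid g(\omega)\in W\}\subset G\times\Img\psi$ as in Proposition \ref{pDimImgPsiW}, that proposition shows that for every irreducible component $X_i$ of $X$ the fibre of $\pi|_{X_i}$ over $\id$ is an irreducible component of $\Img\psi\cap W$ of dimension $\dim X_i-6$. Since $\{\id\}\times(\Img\psi\cap W)=X\cap(\{\id\}\times V)=\bigcup_i\bigl(X_i\cap(\{\id\}\times V)\bigr)$, these irreducible sets are exactly the components of $\Img\psi\cap W$, so $\dim(W\cap\Img\psi)=\max_i\dim X_i-6$.

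\textbf{Step 2: $\max_i\dim X_i=\dim\Img\eta+1$.} By Lemma \ref{lOtwo} the fibres of $\eta$ onto $\Img\eta$ are all isomorphic to $O(2)\subset G$, which is $1$-dimensional, and its components $X_i$ correspond $1:1$ to the components $E_i$ of $\Img\eta$ with $\dim X_i=\dim E_i+1$; taking the maximum gives the claim.

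\textbf{Step 3: $\dim\Img\eta=\dim\Img\psi$.} By Lemma \ref{lSymIdeal} we have $\Img\eta=\{(\tilde v,\omega)\mid\omega(\tilde v)=d\omega(\tilde v)=0,\ \rank F_2(\tilde v,\omega)=2\}$; I look at $\rho\colon\Img\eta\to\Img\psi$, $(\tilde v,\omega)\mapsto\omega$. For generic $a\in\AA^n$ the form $\phi(a)$ has a center of rank $2$ at some $a_0$ by hypothesis, and since the rank of $F_2$ is an affine invariant, $g(\phi(a))$ has a center of rank $2$ at $g(a_0)$; as the forms $g(\phi(a))$ are dense in $\Img\psi$, the map $\rho$ is dominant. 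The fibre of $\rho$ over such an $\omega$ sits inside the set of zeros of $\omega$; affine transformations carry zeros bijectively to zeros, and the generic $\phi(a)$ has only finitely many zeros by hypothesis, so the generic $g(\phi(a))$ — and hence, the locus of forms with a positive-dimensional zero scheme being closed, the generic element of $\Img\psi$ — has only finitely many zeros. Thus $\rho$ has finite generic fibre and Theorem \ref{tSemi} yields $\dim\Img\eta=\dim\Img\psi$.

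Combining Steps 1--3 with Proposition \ref{pDimImgPsi},
\[
\dim(W\cap\Img\psi)=\dim X-6=(\dim\Img\eta+1)-6=\dim\Img\psi-5=n+1-d_1-d_2.
\]
The delicate point is Step 3: one must be sure that the two conditions used there — having only finitely many zeros, and having a zero at which $F_2$ has rank $2$ — which are checked only on the constructible dense subset $\{g(\phi(a))\}$, in fact hold on a dense open subset of the closure $\Img\psi$. The first comes from closedness of the locus of forms whose zero scheme has a one-dimensional component (upper semicontinuity of fibre dimension), and the second is exactly the dominance of $\rho$ established above; the rest is routine fibre-dimension bookkeeping.
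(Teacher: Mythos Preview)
Your proof is correct and follows essentially the same route as the paper: you chase dimensions through $\pi$, $\eta$, and the second projection (which the paper names $\xi$ rather than $\rho$), using the rank-$2$ center hypothesis for dominance and the finitely-many-zeros hypothesis for generic finiteness, and then plug in Proposition~\ref{pDimImgPsi}. The only cosmetic difference is that you are more explicit about taking the maximum over components in Step~1, whereas the paper writes $\dim X$ directly.
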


\begin{proof}
By assumption a Zariski open subset of $\Img \phi$ contains differential forms 
with a rank $2$ center. Since this fact is invariant under action of $G$ the same is true for
$\Img \psi$. For every such element  $\omega \in \Img \psi$ one can find an element $g \in G$
such that $g(\omega)$ is a Poincar\'e differential form in $W$. Therefore $\xi \circ \eta$ is
dominant. If $\omega$ has only finitely many zeros then $\xi^{-1}(\omega)$ is finite, so $\xi$
is generically finite by our assumptions. We obtain
\[
	\dim X = \dim \Img \psi +1.
\]
Using Proposition \ref{pDimImgPsi} and Proposition \ref{pDimImgPsiW} we obtain
\[
	\dim \Img \psi \cap W = \dim X - 6 = \dim \Img \psi -5 = n+1 - d_1 -d_2.
\]
\end{proof} 

\begin{calc} \xlabel{calcDimWImgPsi}
Using Corollary \ref{cDimWImgPsi} we calculate $\dim W \cap \Img \psi$ for \zoladeks families of rationally
revesible centers. The results
can also be found in Figure \ref{fDimGa}. For \zoladeks families of Darboux centers we have
$d_1=d_2=0$ and therefore the dimensions are equal to $n+1$ and can be read from Figure \ref{fDimImgPhi}.
\end{calc}

\begin{rem}
We collect the previous definitions, lemmata and propsitions in the following
diagram:
\xycenter{
	G 
	&
	& X_i \ar[ll]_{\pi}^{\txt{\tiny $\codim 6$ Fibers}} \ar@{{}{}{}}[r]|\subset \ar[d]
	& X \ar@{{}{}{}}[r]|\subset \ar[d]_{\eta}^{\txt{\tiny O(2)-Fibers}}
	& G \times \Img \psi \ar[d]
	\\
	&
	& E_i \ar@{{}{}{}}[r]|\subset \ar[d]
	& \Img \eta \ar@{{}{}{}}[r]|\subset \ar[d]_{\xi}^{\txt{{\tiny finite}}}
	& \AZ^2 \times \Img \psi \ar[d]
	\\
	&
	& \Img \psi \ar@{{}{}{}}[r]|=       
	& \Img \psi \ar@{{}{}{}}[r]|=
	& \Img \psi
	}
where labels of the $\xi$- and $\eta$ arrows denote the expected fibers. Special
fibers could have a different structure.
\end{rem}

To estimate the component structure of $\pi^{-1}(\id) = \Img \psi \cap W$ we use
again our heuristic approach. 

\begin{calc}
Starting from a family $\phi \colon \AA^n \to V$, we find rational points on $\Img \psi \cap W$ as follows. First choose a rational point $a \in \AA^n$ and consider the differential form
$\omega = \psi(\id,a) = (\phi(a)) \in \Img \psi$. If $v_1,\dots,v_k$ are the rational symmetric
zeros of $\omega$,
then the rational points in the preimage of $\xi$ are
\[
	\xi^{-1} (\omega) = \{ (v_i,\omega) \, | \, i=1,\dots,k \}.
\]
For each pair $(v_i,\omega) \in \AA^2 \times \Img \psi$ the preimage of $\eta$ is
\[
	\eta^{-1} (v_i,\omega) = \{(M,-Mv_i,\omega_g \, | \, (M,-M^{-1}v_i)(\omega) \in W\}
\]
Now 
\[
	(\begin{smallmatrix} M & -Mv_i  \\ 0 & 1 \end{smallmatrix}) = 
	(\begin{smallmatrix} M & 0  \\ 0 & 1 \end{smallmatrix})
	(\begin{smallmatrix} 1 & -v_i  \\ 0 & 1 \end{smallmatrix})
\]
and $\omega_i := (\begin{smallmatrix} 1 & -v_i  \\ 0 & 1 \end{smallmatrix})(\omega)$ has
zeros at $(v_j-v_i)$ in particular one at zero. Therefore $\omega_i$ is of the form
\[
	\omega_i = l_{11} xdx + l_{12}(xdy+ydx) + l_{22}ydy + \text{higher order terms}
\]
We have $(\begin{smallmatrix} M & 0  \\ 0 & 1 \end{smallmatrix})(\omega_i) \in W$
if and only if
\[
	M^t L_i M = (\begin{smallmatrix} 1 & 0  \\ 0 & 1 \end{smallmatrix})
\]
with
\[
	L_i = (\begin{smallmatrix} l_{11} & l_{12}  \\ l_{21} & l_{22} \end{smallmatrix}).
\]
Since by Lemma \ref{lOtwo} the solution is a one dimensional space,
we can fix one entry of $M$ and and generically obtain finitely many rational solutions
$M_{i1},\dots, M_{il}$. In this manner we have found finitely many rational points
\[
	(g,\omega) \in (\eta \circ \xi)^{-1}(\omega) \subset X \subset G \times \Img \phi
\]
with 
\[
	g \in \{ (M_{ij},-M_{ij}v_i) \}.
\]
To obtain points in $W \cap \Img \psi = \pi^{-1}(\id)$ we operate with $g^{-1}$ on the
whole situation, and obtain points
\[
	(\id, g(\omega)) \in X \subset G \times \Img \psi
\]
The points $g(\omega)$ can then be analysed with our implementation of Frommer's
algorithm. If the first $13$ focal values of $g(\omega)$ vanish we calculate the
codimension of the tangent space to $Z_{13}\supset Z_{\infty}$ in these points. 
The results of doing this
for $2000$ random choices of $a$ in each of \zoladeks \, families are available as hash tables  
\verb#experimentsCR# and \verb#experimentsCD# in \verb#survey2.m2#.
For the families $CD_{24}$ and $CD_{32}$ we did not find any differential forms
this way. 
\end{calc}

We now want to identify those \zoladek-families that define reduced components
of the center variety $Z_{13} \supset Z_{\infty}$. For this we use again our finite field heuristic.

\begin{calc}
Consider a family $\phi \colon \AA^n \to V$ and set $d=W \cap \Img \psi$.
$W$ might have several components $W_i$ of which at least one has dimension $d$.
By the procedure above we expect
to find approxemately $2000/p^{d-\dim W_i}$ points on a component 
$W_i$ of $W \cap \Img \psi$. The generic codimension of a tangent
space to the center variety in points of $W_i$ is $14-\dim W_i$ if and 
only if $W_i$ is also a reduced component of the center variety. We can 
therefore heuristically identify reduced components of the center variety $Z_{13}$
by scaling our point counts by $p^{d-14+c}/2000$ where $c$ is the codimension
of the tangent space a each point. The result is contained in Figures \ref{fCRheuristic} and \ref{fCDheuristic}.
\end{calc}

\begin{figure}
\[
\begin{array}{|c|c|c|}\hline
\text{codim} & CR & CD \\ \hline
6 & 1 & 3 \\
7 & 11 & 1 , 2 , 4 \\
8 & 2 , 4 , 6 , 14 & 7 \\
9 & 3 , 9 , 10 , 15 & 8 , 21 \\
10 &   & 10 , 17 , 25 , 27 \\
11 &   & 31 \\
\hline
\end{array}
\]
\caption{\zoladek families that heuristically
parametrize reduced components of the center variety} \label{fZoladek}
\end{figure}

\begin{rem}
A family $\phi$ will have all numbers calculated above close to zero if
one of the following holds
\begin{enumerate}
\item $\phi$ defines only a subfamily of a true component of the center variety and the codimension of
the family inside the component is at least one.
\item $\phi$ defines a non reduced component of the center variety 
\item The generic point of $\phi$ does not have a symmetric center. 
\end{enumerate}
We suspect that all three possibilities actually occur. The third case
can be easily detected by analysing a generic point. This shows that
families $CD_{33}$ and $CD_{34}$ are of this kind. Probably this is either
due to misprints introduced by us or by misprints
in \cite{zoladekRational} or \cite{zoladekCorrection}
that we were not able to find and correct.

To distinguish between the cases (1) and (2) is much more difficult.
\end{rem}

\begin{figure}
%{\bf \Large Reduced Components of the Center Varitey}
\includegraphics[width=14cm,trim=0cm 1cm 0mm 20mm, clip=true]{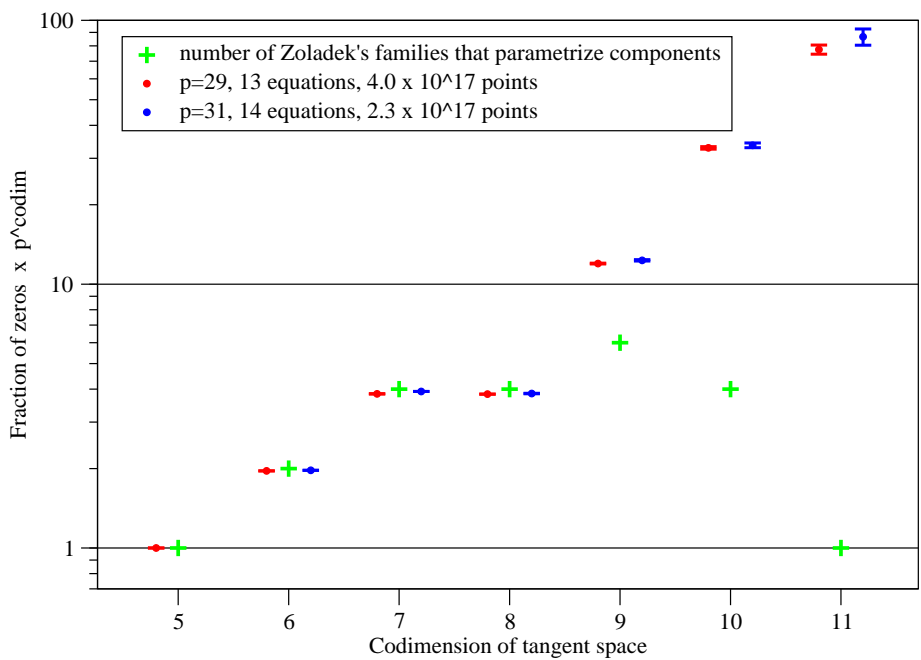}
\caption{Our heuristic predicts that up to codimension 8 all reduced 
components of the center variety are known. For higher codimension
many components are waiting to be discovered. \label{fAll}}
\end{figure}

\begin{rem}
Notice that only smooth points on each components have the
correct tangent dimenesion. Therefore we expect the results of the above
scaling to be less than $1$ for each component of the center variety. We have
collected those families that do parametrize a reduced component
of the center variety by this heuristic in Figure \ref{fZoladek}.
\end{rem}

Comparing the number of components contained in Figure \ref{fZoladek} with 
those of Figure \ref{fJakob} we find that up to codim 7
both counts agree in codim 8 there are 5 components given by \zoladek, while
we see only 4 in our heuristic. Fortunately Ulrich Rhein has found numerical
evidence for $CR_4 \subset CR_6$ in his Diploma Thesis \cite{Rhein}. It
is not difficult to prove that this is indeed the case:

\begin{prop}
All differentials parametrized by \zoladeks family $CR_4$ are
also contained in \zoladeks family $CR_6$.
\end{prop}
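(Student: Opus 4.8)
The plan is to establish the containment $\Img\phi_4\subseteq\Img\phi_6$ by exhibiting an explicit reparametrization. Both families are given, in \zoladeks list (with our corrections, collected in the \verb#CenterFocus# package), as morphisms $\phi_4\colon\AA^{7}\to V$ and $\phi_6\colon\AA^{8}\to V$ defined by concrete polynomial formulas in the parameters; here $7=\dim\Img\phi_4$ and $8=\dim\Img\phi_6$ are read off from Figure \ref{fDimImgPhi} and the vanishing of $d_1$ in Calculation \ref{cDimFiberPhi}. The claim will follow once we produce a morphism (or at worst a rational map, to be cleared of denominators afterwards) $\mu\colon\AA^{7}\to\AA^{8}$ with
\[
	\phi_6\circ\mu=\phi_4 .
\]
Since both sides are maps into the $20$-dimensional space $V$, this identity is a finite list of polynomial identities in the parameters of $CR_4$, which can be verified directly in \verb#Macaulay2#.

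To find $\mu$, I would match the two differential forms coefficient by coefficient. As $\Img\phi_4$ has dimension one less than $\Img\phi_6$, it should appear as a hypersurface in $\Img\phi_6$, so $\mu$ ought to be generically an immersion whose image is cut out by a single relation among the $CR_6$-parameters -- typically the specialization of one of them, with the remaining $CR_6$-parameters expressed as rational functions of the $CR_4$-parameters. One reads off these functions from the linear and quadratic parts of the two forms and then checks that the cubic parts agree. If a purely algebraic guess is not immediate, one can sample many random $a\in\AA^{7}(\FF_p)$, solve $\phi_6(b)=\phi_4(a)$ for $b$ over $\FF_p$ -- the same kind of computation already used for the heuristic in Section \ref{sZoladek} -- and interpolate the defining equations of $\mu$ from the data; Rhein's numerical evidence for $CR_4\subset CR_6$ already guarantees that such $b$ exist.

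Once $\mu$ is in hand and $\phi_6\circ\mu=\phi_4$ is verified, the containment $\Img\phi_4\subseteq\Img\phi_6$ is immediate. One then gets more: by Proposition \ref{pDimImgPsi} and the entries of Figure \ref{fDimGa} one has $\dim\Img\psi_4=7+6-d_1-d_2=11=8+6-d_1-d_2=\dim\Img\psi_6$, and since both $G$-saturations are irreducible, $\Img\phi_4\subseteq\Img\phi_6$ forces $\Img\psi_4=\Img\psi_6$. Hence $CR_4$ and $CR_6$ cut out the same component $\Img\psi\cap W$ of the center variety, which resolves the apparent codimension-$8$ discrepancy between Figure \ref{fJakob} and Figure \ref{fZoladek}.

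The main obstacle is the middle step: guessing the correct change of parameters $\mu$. The two families are written in rather different normal forms, so the correspondence is not visible by inspection; one will likely need either the finite-field sampling and interpolation sketched above, or a preliminary affine normalization of both families before comparing coefficients. Everything after $\mu$ is produced -- the verification of $\phi_6\circ\mu=\phi_4$ and the dimension bookkeeping -- is routine.
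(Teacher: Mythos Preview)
Your strategy is exactly the paper's: exhibit an explicit reparametrization $\mu$ with $\phi_6\circ\mu=\phi_4$. Where you diverge is only in your expectation of difficulty. You write that ``the two families are written in rather different normal forms, so the correspondence is not visible by inspection'' and propose finite-field sampling plus interpolation to recover $\mu$. In fact the paper's proof is a single line: one obtains $CR_4$ from $CR_6$ by setting $k=0$ and cyclically renaming the remaining parameters $r\to q\to p\to n\to l\to k$ (in \zoladeks notation). So $\mu$ is just a coordinate projection followed by a permutation of variables, and no computation beyond writing down the two formulas side by side is required.

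Your additional paragraph about the $G$-saturations is correct and is the reason the proposition matters in context, but note that it goes beyond what the proposition actually asserts; the paper states only the containment $\Img\phi_4\subseteq\Img\phi_6$ and leaves the equality $\Img\psi_4=\Img\psi_6$ implicit in the surrounding discussion.
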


\begin{proof}
One can obtain $CR_4$ from $CR_6$ by
setting $k=0$ and renaming the variables as follows
$r \to q \to p \to n \to l \to k$ in \zoladeks notation.
\end{proof}
 
With this correction we have compared our heuristic component count
with the components detected among \zoladeks list in Figure \ref{fAll}.
We observe, that up to codimension $8$ both counts agree. Starting from
codimension $9$ there seem to exist many more reduced components
than previously known. We therefore

\begin{conj} \xlabel{cNumComponents}
The number of reduced components of the center variety in degree $3$ is
\begin{itemize}
\item 1 in codimension 5
\item 2 in codimension 6
\item 4 in codimension 7
\item 4 in codimension 8
\item at least 12  in codimension 9
\item at least 33  in codimension 10
\item at least 74 in codimension 11
\item possibly further components in codimension 12
\end{itemize}
\end{conj}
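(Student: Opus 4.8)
This statement is a conjecture, not a theorem, so the ``proof'' I would offer is really a derivation of the precise numbers from the finite-field data of Sections \ref{sExperiment} and \ref{sZoladek}; I do not expect it to be upgradable to a rigorous argument with present techniques. The plan is to split the eight entries into the firm ones (codimensions $5$ through $8$) and the conjectural ones (codimensions $9$ through $12$). For the firm entries I would argue as follows. In codimension $5$ the heuristic count of Figure \ref{fJakob} is $1.00$ with error below $0.01$, and exactly one reduced component is visible, namely the trivial Hamiltonian component $\{\,\omega = dF : \deg F = 4\,\}$, which is $9$-dimensional in $W$, hence of codimension $5$, and is not on \zoladeks list; so the count is $1$. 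In codimensions $6$, $7$, $8$ I would compare the heuristic counts $1.96$, $3.84$, $3.83$ of Figure \ref{fJakob} with the \zoladek families of Figure \ref{fZoladek} that pass the reducedness heuristic: these give $2$ families in codimension $6$, $4$ in codimension $7$, and $4 + 1 = 5$ in codimension $8$. The only mismatch is in codimension $8$, and it is removed by the proposition above showing that $CR_4$ and $CR_6$ parametrise the same forms, leaving $4$ distinct reduced components. Since in each of codimensions $5$ through $8$ the number of explicitly exhibited reduced components now equals the heuristic estimate within its error bar, I would conjecture the lists are complete and the counts are exactly $1$, $2$, $4$, $4$.

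For codimensions $9$, $10$, $11$, $12$ I would apply Heuristic \ref{hTangent} directly to the raw data of Figure \ref{fJakob}: the $332067$, $31422$, $2556$ and $1$ points over $\FF_{29}$ with tangent space of the respective codimension yield estimates $11.97$, $32.85$, $77.50$ and $0.88$ for the number of reduced components, with the errors listed in that figure. Reading these conservatively (roughly at the low end of the $95\%$ confidence interval) produces the clauses ``at least $12$'', ``at least $33$'', ``at least $74$'', while the single codimension-$12$ point only supports ``possibly further components''. I would also note that Figure \ref{fZoladek} already exhibits $4$ reduced components from \zoladeks list in codimension $9$ and $4$ in codimension $10$, so those loci are certainly nonempty; since the heuristic estimates are far larger than the numbers of known families, the conclusion is that many reduced components beyond \zoladeks list must still exist, which is exactly what the ``at least'' clauses assert.

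The hard part is that none of this is a proof, and there is no feasible route to one. The direct approach would be to compute the focal polynomials $s_1, \dots, s_{13}$ and to perform a primary decomposition of $I_{13} \subset \QQ[p_{ij}, q_{ij}]$; but $s_j$ is already intractable to compute for $j \ge 7$, and even $I_5$ cannot be decomposed by current computer algebra systems, so this is hopeless. The indirect approach would be to write down explicit parametrisations of the conjectured extra components in codimensions $9$, $10$, $11$ and prove that each defines a reduced component of $Z_{13}$; this could in principle establish the ``at least'' bounds, but it would not pin down the exact values in codimensions $5$ through $8$ (for that one still needs a completeness statement), and in any case no such new families are presently known. So the proof proposal is, honestly, a proposal for how the finite-field experiments are synthesised with the $CR_4 \subset CR_6$ identification into the numerical conjecture above; a genuine proof must await either far better elimination and decomposition methods or the discovery of the missing families.
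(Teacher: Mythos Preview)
Your proposal is essentially correct and mirrors the paper's own derivation: the statement is a conjecture, and the numbers are read off from the heuristic point counts in Figure \ref{fJakob}, matched against the \zoladek families in Figure \ref{fZoladek}, with the $CR_4 \subset CR_6$ identification resolving the apparent discrepancy in codimension $8$. One small slip: in codimension $9$ Figure \ref{fZoladek} lists six families ($CR_3$, $CR_9$, $CR_{10}$, $CR_{15}$, $CD_8$, $CD_{21}$), not four; and your ``low end of the confidence interval'' rationale does not quite produce ``at least $33$'' from $32.85 \pm 0.37$, so the precise cutoffs are a judgement call rather than a uniform rule --- but none of this affects the substance.
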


\begin{rem}
The \verb#Macaulay2#  calculations made in this section are contained in 
the file \verb#survey2.m2# using the packages
\verb#CenterFocus# and \verb#Frommer#. All three are available at \cite{centerfocusSourceforge}. \verb#Macaulay2# is available at \cite{M2}.
\end{rem}

\def\cprime{$'$} \def\cprime{$'$}

\end{document}